\newenvironment{manualtheorem}[1]{%
	\manualtheoreminner
}{\endmanualtheoreminner}
\newtheorem{theorem}{Theorem}[section]
\newtheorem{proposition}[theorem]{Proposition}
\newtheorem{corollary}[theorem]{Corollary}
\newtheorem{lemma}[theorem]{Lemma}
\newtheorem{conjecture}[theorem]{Conjecture}
\newtheorem*{claim}{Claim}
\theoremstyle{definition}
\newtheorem{remark}[theorem]{Remark}
\newtheorem{example}[theorem]{Example}
\newtheorem{question}[theorem]{Question}
\DeclareMathOperator{\gon}{\operatorname{gon}}
\DeclareMathOperator{\Div}{\operatorname{Div}}
\DeclareMathOperator{\Proof}{\operatorname{Proof}}
\DeclareMathOperator{\outdeg}{\operatorname{outdeg}}
\newcommand\todo[1]{{\textcolor{red}{#1}}}
\title{Weierstrass sets on finite graphs}
\author{Alessio Borzì}
\begin{document}
	
	\maketitle
	
	\begin{abstract}
		We study two possible tropical analogues of Weierstrass semigroups on graphs, called rank and functional Weierstrass sets. We prove that on simple graphs, the first is contained in the second. We completely characterize the subsets of $\mathbb{N}$ arising as a functional Weierstrass set of some graph. Finally, we give a sufficient condition for a subset of $\mathbb{N}$ to be the rank Weierstrass set of some graph, allowing us to construct examples of rank Weierstrass sets that are not semigroups.
	\end{abstract}
	
	\section{Introduction}
	
	Let $X$ be a smooth projective algebraic curve of genus $g$ and fix a point $P \in X$. Denote by $H(P)$ the set of pole orders at $P$ of rational functions regular on $X \setminus \{P\}$. By the Weierstrass gap theorem (see \cite[III.5.3]{farkas1992riemann}), the set of \emph{gaps} $G(P) = \mathbb{N} \setminus H(P)$ has cardinality exactly $g$. This implies that $H(P)$ is a \emph{numerical semigroup}, that is, a cofinite additive submonoid of $\mathbb{N}$. The numerical semigroups arising in this way are called \emph{Weierstrass semigroups}. We have $G(P) = \{ 1,\dots,g \}$ except in a finite number of points, called \emph{Weierstrass points} of $X$ (see \cite[III.5.9]{farkas1992riemann}).
	
	In 1893 Hurwitz \cite{hurwitz1892algebraische} asks if all the numerical semigroups arise in this manner. Several years later, in 1980, Buchweitz \cite{buchweitz1980zariski} showed that the numerical semigroup $S = \langle 13, 14, 15, 16, 17, 18, 20, 22, 23 \rangle$ is not Weierstrass (see also \cite[page 499]{eisenbud1987weierstrass}). The proof essentially gives the following necessary condition for a semigroup to be Weierstrass: the $m$-sumset of the set of gaps must satisfy $|mG(P)| \leq (2m-1)(g-1)$ for any integer $m  \geq 2$. Several numerical semigroups not satisfying the previous condition are constructed in \cite{komeda1998nonweierstrass}. Furthermore, in \cite{eliahou2020buchweitz} it was proved that for a fixed numerical semigroup $S$, the set of integers $m$ that do not satisfy the above condition is finite. Despite these results, little is known more generally about the family of Weierstrass semigroups. For instance, the problem of determining its	density in the set of all numerical semigroups is still open \cite{kaplan2013proportion}.
	
	After the advent of tropical geometry, the tropical analogues of many classical results in algebraic geometry were found. Baker and Norine \cite{baker2007riemann} proved a Riemann-Roch theorem for graphs, which was successively extended by Gathmann and Kerber \cite{gathmann2008riemann} and Mikhalkin and Zharkov \cite{mikhalkin2008jacobian} to \emph{metric graphs}, namely (abstract) tropical curves. The analogue notion of Weierstrass points on graphs was studied for instance in \cite[Section 4]{baker2008specialization} and \cite{richman2019distribution}.
	
	Inspired by a work of Kang, Matthews and Peachey \cite{kang2019laplacian}, in this paper we investigate possible tropical analogues of Weierstrass semigroups. We will focus our attention on graphs rather than metric graphs, the latter are left for future work. It was already noted in \cite{kang2019laplacian} that two possible non-equivalent definitions can be given, as we now explain. Throughout this paper, a \emph{graph} will mean a finite connected multigraph having no loop edges. Let $G$ be a graph and fix a vertex $P \in V(G)$ of $G$. The \emph{functional Weierstrass set} of $G$ at $P$ is defined by
	\[ H_f(P) = \{ n \in \mathbb{N} : \exists f \in \mathcal{M}(G) \text{ that has a unique pole of order $n$ at $P$} \}, \]
	where $\mathcal{M}(G)$ is the set of all integer-valued functions on the vertices of $G$. The \emph{rank Weierstrass set} of $G$ at $P$ is defined by
	\[ H_r(P) = \{ n \in \mathbb{N} : r(nP) > r((n-1)P) \}, \]
	where $r(D)$ denotes the \emph{rank} of the divisor $D$ of the graph $G$, in the sense of Baker and Norine \cite{baker2007riemann} (see Section \ref{section 2}). Classically, for curves, we have $H_f(P) = H_r(P) = H(P)$. However this is not the case for graphs, for instance the cardinality of the set difference $H_f(P) \setminus H_r(P)$ can be arbitrarily large \cite[Proposition 3.9]{kang2019laplacian}.
	
	Our first main result was conjectured in \cite{kang2019laplacian}, and relates the two sets when $G$ is a graph with no multiple edges and more than one vertex, that in this paper will be called \emph{simple}.
	
	\begin{manualtheorem}{A}[Theorem \ref{thm:functional contains rank}]
		Let $G$ be a simple graph. For every $P \in V(G)$ we have $H_r(P) \subseteq H_f(P)$.
	\end{manualtheorem}
	
	As an application of the previous theorem, we calculate the rank and functional Weierstrass set of the graphs $K_{n+1}$ and $K_{n,m}$.
	
	Secondly, we completely characterize the subsets of $\mathbb{N}$ arising as functional Weierstrass sets of graphs and of simple graphs, answering a question in \cite{kang2019laplacian}.
	
	\begin{manualtheorem}{B}[Theorem \ref{thm:functional characterization}]
		The functional Weierstrass sets of graphs (resp. simple graphs) are precisely the additive submonoids of $\mathbb{N}$ (resp. numerical semigroups).
	\end{manualtheorem}
	
	Further, we give a sufficient condition for a subset of $\mathbb{N}$ to be the rank Weierstrass set of a graph.
	
	\begin{manualtheorem}{C}[Theorem \ref{thm:Weierstrass set sequence}]
		Let $e_1 \geq e_2 \geq \dots \geq e_n \geq 0$ be integers and set $s_i = \nolinebreak \sum_{j=1}^i e_j$. There exists a simple graph $G$ with a vertex $P \in V(G)$ such that
		\[ H_r(P) = \{ 0,s_1,s_2,\dots,s_{n-1}\} \cup (s_n + \mathbb{N}). \]
	\end{manualtheorem}

	The previous theorem allows us to construct families of graphs in which the rank Weierstrass set is not a semigroup (see Example \ref{ex:non semigroup}), justifying the name ``Weierstrass \emph{set}".
	
	\section{Preliminaries}\label{section 2}
	
	In this section, we fix our notation and review the basics and some results of Riemann-Roch theory on finite graphs.
	
	In this paper, a \emph{graph} will mean a finite connected multigraph having no loop edges; a \emph{simple graph} will mean a graph with no multiple edges and more than one vertex. Let $G$ be a graph and let $V(G)$ (resp. $E(G)$) denote the set of vertices (resp. edges) of $G$. The set $\Div(G)$ of \emph{divisors} of $G$ is the free abelian group on $V(G)$. We think of a divisor as a formal integer linear combination of the vertices $D = \sum_{P \in V(G)} a_P P \in \Div(G)$ with $a_P \in \mathbb{Z}$. For convenience, we will write $D(P)$ for the coefficient $a_P$ of $P$ in $D$. The \emph{degree} of a divisor $D$ is defined by $\deg(D) = \sum_{P \in V(G)} D(P) \in \mathbb{Z}$. If $D,D' \in \Div(G)$ are two divisors, then $D \geq D'$ if and only if $D(P) \geq D'(P)$ for all $P \in V(G)$. A divisor $D$ is \emph{effective} if $D \geq 0$. The set of effective divisors of degree $d$ is denoted by $\Div_+^d(G)$.
	
	Let $\mathcal{M}(G) = \operatorname{Hom}(V(G),\mathbb{Z})$ be the set of integer-valued functions on the vertices of $G$. For every vertex $P \in V(G)$, define the \emph{indicator function} $f_P \in \mathcal{M}(G)$ by
	\[ f_P(Q) = \begin{cases}
		-1 & Q = P, \\
		0 & Q \neq P.
	\end{cases} \]
	Let $f \in \mathcal{M}(G)$, and denote by $\mathcal{N}(P)$ the neighbourhood of $P \in V(G)$, that is, the subset of vertices of $G$ adjacent to $P$. Define the \emph{Laplacian operator} $\Delta:\mathcal{M}(G) \rightarrow \Div(G)$ by
	\[ \Delta f = \sum_{P \in V(G)} \left( \sum_{Q \in \mathcal{N}(P)} \big( f(P)-f(Q) \big) \right) P. \]
	The divisors of the form $\Delta f$ are \emph{principal}. For convenience we will write $\Delta_P f$ for the coefficient $\Delta f(P)$. If we think of $f$ as a vector, the Laplacian operator can be seen as the multiplication of the \emph{Laplacian matrix} $Q = D-A$, where $D$ is the diagonal matrix of the degrees of the vertices, and $A$ is the adjacency matrix of $G$. The matrix $Q$ has rank $|V(G)|-1$, and $\ker Q = (1,\dots,1)^t$. From this fact, it is easy to see that every principal divisor has degree $0$.
	
	Two divisors $D,D' \in \Div(G)$ are \emph{linearly equivalent}, written $D \sim D'$, if $D - D' = \Delta f$, for some $f \in \mathcal{M}(G)$. The \emph{linear system} associated to a divisor $D \in \Div(G)$ is
	\[ |D| = \{ E \in \Div(G) : E \sim D, E \geq 0 \}. \]
	The \emph{rank} $r(D)$ of a divisor $D$ is defined as $-1$ if $|D| = \emptyset$, otherwise
	\[ r(D) = \max \{ k \in \mathbb{N} : |D-E| \neq \emptyset,\, \forall E \in \Div_+^k(G) \}. \]
	
	\begin{lemma}\cite[Lemma 2.1]{baker2007riemann}\label{lem:sum rank inequality}
		For all $D_1,D_2 \in \Div(G)$ with $r(D_1),r(D_2) \geq 0$ we have $r(D_1+D_2) \geq r(D_1)+r(D_2)$.
	\end{lemma}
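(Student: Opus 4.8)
The plan is to reduce the superadditivity of the rank directly to its definition, exploiting the fact that any effective divisor of the relevant degree can be split into two effective pieces of the prescribed degrees. Write $r_1 = r(D_1)$ and $r_2 = r(D_2)$, both nonnegative by hypothesis; the goal is to prove $r(D_1+D_2) \geq r_1 + r_2$. Unwinding the definition of rank, this amounts to showing that for \emph{every} effective divisor $E \in \Div_+^{r_1+r_2}(G)$ the linear system $|D_1 + D_2 - E|$ is nonempty.

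The key observation is that such an $E$ decomposes as $E = E_1 + E_2$ with $E_1 \in \Div_+^{r_1}(G)$ and $E_2 \in \Div_+^{r_2}(G)$. Indeed, writing $E = \sum_{P \in V(G)} E(P)\, P$ with all $E(P) \geq 0$ and $\sum_P E(P) = r_1 + r_2$, one can select, vertex by vertex, nonnegative coefficients $0 \leq E_1(P) \leq E(P)$ summing to $r_1$, and then set $E_2 = E - E_1$; this is always possible because the total degree is at least $r_1$, and it keeps both $E_1$ and $E_2$ effective.

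With this decomposition in hand, the definition of rank supplies the two memberships directly. Since $r(D_1) = r_1$, the system $|D_1 - E_1|$ is nonempty, so $D_1 - E_1 \sim F_1$ for some effective $F_1$; likewise $D_2 - E_2 \sim F_2$ for some effective $F_2$. I would then add these equivalences: linear equivalence is additive, being defined by the subgroup of principal divisors $\Delta f$, so that $D_1 + D_2 - E = (D_1 - E_1) + (D_2 - E_2) \sim F_1 + F_2$. As the sum of two effective divisors is again effective, $F_1 + F_2 \geq 0$ witnesses $|D_1 + D_2 - E| \neq \emptyset$. Since $E$ was an arbitrary element of $\Div_+^{r_1+r_2}(G)$, the definition of rank yields $r(D_1 + D_2) \geq r_1 + r_2$.

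The argument is essentially bookkeeping, so there is no serious obstacle; the one point that requires care is that one must split the \emph{arbitrary} test divisor $E$ rather than some conveniently chosen divisor, because the definition of rank quantifies over all effective divisors of the given degree. It is therefore not enough to exhibit a single good decomposition — the decomposition must be available for every $E$, which the vertex-by-vertex selection above guarantees.
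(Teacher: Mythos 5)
Your argument is correct and is essentially the original Baker--Norine proof of this lemma: split an arbitrary effective test divisor $E$ of degree $r(D_1)+r(D_2)$ into effective pieces of degrees $r(D_1)$ and $r(D_2)$, apply the definition of rank to each summand, and add the resulting linear equivalences. The paper itself gives no proof, citing \cite[Lemma 2.1]{baker2007riemann} instead, and your write-up matches that standard argument, including the correct emphasis on splitting \emph{every} test divisor rather than a single convenient one.
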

	
	\begin{lemma}\cite[Lemma 2.7]{baker2008specialization}\label{lem:rank minus one}
		Let $G$ be a graph, and let $D \in \Div(G)$. Then $r(D - P ) \geq r(D)-1$ for all
		$P \in V(G)$, and if $r(D) \geq 0$, then $r(D - P ) = r(D) - 1$ for some $P \in V (G)$.
	\end{lemma}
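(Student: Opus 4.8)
The plan is to derive both statements directly from the definition of the rank, using the reformulation that $r(D) \geq k$ holds precisely when $|D - E| \neq \emptyset$ for every effective divisor $E$ of degree $k$. First I would record the elementary fact that this condition is monotone in $k$: if it holds for some $k \geq 1$, then it holds for $k-1$, since adjoining an arbitrary vertex $Q$ to an effective divisor of degree $k-1$ yields one of degree $k$, and $|D - E - Q| \neq \emptyset$ forces $|D - E| \neq \emptyset$. Hence $r(D)$ is genuinely the largest such $k$, and $r(D) = r$ means exactly that $|D - E| \neq \emptyset$ for all effective $E \in \Div_+^r(G)$, while there is \emph{some} effective divisor $E_0$ of degree $r+1$ with $|D - E_0| = \emptyset$.

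For the inequality $r(D - P) \geq r(D) - 1$, I would dispose of the cases $r(D) \leq 0$ immediately, since the right-hand side is then at most $-1$ and ranks are always at least $-1$. When $r(D) = r \geq 1$, fix any vertex $P$ and any effective $E \in \Div_+^{r-1}(G)$. Then $P + E$ is effective of degree $r$, so $|D - (P + E)| \neq \emptyset$ by the defining property of $r(D) = r$. Rewriting $D - (P + E) = (D - P) - E$ shows $|(D - P) - E| \neq \emptyset$ for every such $E$, whence $r(D - P) \geq r - 1$.

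For the second assertion, suppose $r(D) = r \geq 0$ and take the effective divisor $E_0$ of degree $r+1$ with $|D - E_0| = \emptyset$ furnished above. Since $\deg E_0 = r + 1 \geq 1$, the divisor $E_0$ is nonzero and effective, so some vertex $P$ lies in its support and $E_0 - P$ is effective of degree $r$. I claim this $P$ works: if instead $r(D - P) \geq r$, then applying the defining property of the rank to $D - P$ and the degree-$r$ effective divisor $E_0 - P$ would give $|(D - P) - (E_0 - P)| = |D - E_0| \neq \emptyset$, contradicting the choice of $E_0$. Hence $r(D - P) \leq r - 1$, and combined with the first part this forces $r(D - P) = r - 1$.

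The argument is essentially bookkeeping with the definition, so I do not expect a deep obstacle; the one point that requires care is the opening step, namely extracting from $r(D) = r$ a single effective divisor of degree $r+1$ that fails to be equivalent to an effective divisor. This is exactly the negation of the maximality built into the definition of rank, and it is what allows me to ``peel off'' one vertex $P$ from $E_0$ and drop the rank by exactly one.
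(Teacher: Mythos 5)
Your argument is correct. The paper does not prove this lemma at all---it is quoted from \cite[Lemma~2.7]{baker2008specialization}---and your proof is essentially the standard one given there: unwinding the definition of rank (using the monotonicity in $k$ that you rightly flag as the one point needing care), peeling a vertex $P$ off an effective divisor $E_0$ of degree $r(D)+1$ witnessing $|D-E_0|=\emptyset$, and noting that $(D-P)-(E_0-P)=D-E_0$. No gaps.
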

	
	The \emph{canonical divisor} of $G$ is
	\[ K_G = \sum_{P \in V(G)} (\deg(P)-2)P. \]
	It has degree $\deg(K_G) = 2g-2$, where $g = |E(G)| - |V(G)| + 1$ is the \emph{genus} (or \emph{cyclomatic number}) of the graph $G$. We are now ready to state the Riemann-Roch theorem for graphs, proved by Baker and Norine \cite{baker2007riemann}.
	
	\begin{theorem}[Riemann-Roch for graphs]\label{thm:rr graphs}
		Let $D$ be a divisor on a graph $G$ of genus $g$. Then
		\[ r(D) - r(K_G -D) = \deg(D) + 1 - g. \]
	\end{theorem}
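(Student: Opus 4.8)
The plan is to follow the combinatorial method of Baker and Norine, reducing the statement to a purely discrete fact about reduced divisors and acyclic orientations. First I would fix a base vertex $q \in V(G)$ and recall the theory of \emph{$q$-reduced divisors}: a divisor $D$ with $D(v) \geq 0$ for all $v \neq q$ is $q$-reduced if for every nonempty $A \subseteq V(G) \setminus \{q\}$ there is a vertex $v \in A$ whose coefficient $D(v)$ is smaller than the number of edges joining $v$ to $V(G) \setminus A$. The two facts I need are that every divisor class contains a unique $q$-reduced representative, and that $|D| \neq \emptyset$ if and only if this representative $D'$ satisfies $D'(q) \geq 0$. Both are proved via \emph{Dhar's burning algorithm}, which also gives an effective chip-firing procedure for producing the reduced form.

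Next I would introduce the family $\mathcal{N}$ of \emph{orientation divisors}: to each acyclic orientation $\mathcal{O}$ of $G$ associate $\nu_{\mathcal{O}} = \sum_{v} (\operatorname{indeg}_{\mathcal{O}}(v) - 1)\, v$, which has degree $|E(G)| - |V(G)| = g - 1$. Two observations drive the proof. First, each $\nu_{\mathcal{O}}$ satisfies $r(\nu_{\mathcal{O}}) = -1$: taking $q$ to be a source of $\mathcal{O}$, one checks that $\nu_{\mathcal{O}}$ is already $q$-reduced with $\nu_{\mathcal{O}}(q) = \operatorname{indeg}_{\mathcal{O}}(q) - 1 = -1 < 0$, so $|\nu_{\mathcal{O}}| = \emptyset$ by the effectivity criterion. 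Second, reversing all arrows sends an acyclic orientation to an acyclic orientation and
\[ K_G - \nu_{\mathcal{O}} = \sum_v (\deg(v) - 1 - \operatorname{indeg}_{\mathcal{O}}(v))\, v = \nu_{\bar{\mathcal{O}}}, \]
so $\mathcal{N}$ is stable under the involution $\nu \mapsto K_G - \nu$. This symmetry is exactly what will produce the $r(K_G - D)$ term.

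The combinatorial heart of the argument is the following dichotomy, which I would prove using Dhar's algorithm: \textbf{for every divisor $D \in \Div(G)$, exactly one of the conditions $|D| \neq \emptyset$, and $|\nu - D| \neq \emptyset$ for some $\nu \in \mathcal{N}$, holds.} (That they cannot both hold is immediate: $r(D) \geq 0$ and $r(\nu - D) \geq 0$ would give $r(\nu) \geq 0$ by Lemma \ref{lem:sum rank inequality}, contradicting $r(\nu) = -1$; the substance is that at least one holds.) Granting this, I can express the rank as $r(D) + 1 = \min\{ \deg E : E \geq 0 \text{ and } |E + \nu - D| \neq \emptyset \text{ for some } \nu \in \mathcal{N} \}$, since $r(D) < \deg E$ precisely when some effective $E$ makes $|D - E| = \emptyset$, which by the dichotomy means $|\nu - (D - E)| \neq \emptyset$ for some $\nu$. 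Writing the analogous expression for $r(K_G - D)$ and substituting $\nu = K_G - \nu'$ via the symmetry above, the two minima combine so that their difference telescopes to $\deg(D) + 1 - g$, which is the desired identity.

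The main obstacle is the dichotomy lemma; everything else is bookkeeping once it is in place. Its proof rests on the structure theory of $q$-reduced divisors — in particular that the maximal non-effective $q$-reduced divisors are precisely the classes of orientation divisors $\nu_{\mathcal{O}}$ with unique source $q$ — together with Dhar's burning algorithm to locate, for a given non-effective $D$, an orientation witnessing $|\nu - D| \neq \emptyset$. A secondary point to handle is the reduction to this finite combinatorial core: for divisors of very large or very negative degree the identity is immediate from the definition of rank, and Lemma \ref{lem:rank minus one} lets one interpolate by removing vertices one at a time, so the content is genuinely concentrated in degrees near $g - 1$.
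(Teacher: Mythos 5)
The paper does not prove this theorem; it quotes it from Baker and Norine \cite{baker2007riemann}. Your outline is a faithful reconstruction of that original argument --- the orientation divisors $\nu_{\mathcal{O}}$ of degree $g-1$, the involution $\nu \mapsto K_G - \nu$, and the dichotomy lemma established via $q$-reduced divisors and Dhar's burning algorithm --- so it takes essentially the same approach as the proof the paper relies on, and the sketch is correct.
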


	For $A \subseteq V(G)$ and $Q \in A$, let $\outdeg_A(Q)$ denote the number of edges incident with $Q$ and a vertex in $V(G) \setminus A$. Fix $P \in V(G)$. A divisor $D$ is \emph{$P$-reduced} if it is effective in $V(G) \setminus \{P\}$, and every non-empty subset $A \subseteq V(G) \setminus \{P\}$ contains a vertex $Q \in A$ such that $\outdeg_A(Q) > D(Q)$.
	
	\begin{proposition}\cite[Proposition 3.1]{baker2007riemann}
		Let $P$ be a vertex of a graph $G$. For every divisor $D$ in $G$, there exists a unique $P$-reduced divisor $D'$ such that $D \sim D'$.
	\end{proposition}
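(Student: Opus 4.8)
The plan is to establish existence and uniqueness separately. Uniqueness rests on a discrete maximum principle, while existence rests on a chip-firing (Dhar-type) reduction procedure.

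For uniqueness, suppose $D_1 \sim D_2$ are both $P$-reduced and write $D_1 - D_2 = \Delta f$ for some $f \in \mathcal{M}(G)$; the goal is to force $f$ to be constant, whence $D_1 = D_2$. If $f$ is non-constant, its max-set and min-set are disjoint, so at most one contains $P$; after possibly swapping $D_1$ and $D_2$ (replacing $f$ by $-f$), I may assume the set $A$ on which $f$ attains its maximum does not contain $P$. For $Q \in A$, every neighbour $R \notin A$ satisfies $f(Q)-f(R) \geq 1$ by integrality and maximality, while neighbours inside $A$ contribute $0$, so $\Delta_Q f \geq \outdeg_A(Q)$. Hence $D_1(Q) = D_2(Q) + \Delta_Q f \geq \outdeg_A(Q)$ for every $Q \in A$, using $D_2(Q) \geq 0$ since $Q \neq P$. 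This contradicts the $P$-reducedness of $D_1$, which requires some $Q \in A$ with $\outdeg_A(Q) > D_1(Q)$. Therefore $f$ is constant.

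For existence, I would first reduce to a divisor effective on $V(G) \setminus \{P\}$: using connectivity, push all negative coefficients toward $P$ by firing a suitable nested family of subsets, so that the debt accumulates only at $P$, where negative values are permitted. Given $D \sim D'$ with $D'$ effective away from $P$, I would then run Dhar's burning algorithm with source $P$: a vertex $Q$ burns once the number of edges joining it to already-burnt vertices exceeds $D'(Q)$. If every vertex burns, the stopping condition is exactly the defining property of a $P$-reduced divisor, and we are done. Otherwise the set $A$ of unburnt vertices excludes $P$ and satisfies $\outdeg_A(Q) \leq D'(Q)$ for all $Q \in A$; firing $A$, that is, replacing $D'$ by $D' - \Delta \chi_A$ where $\chi_A$ is the $0/1$ indicator function of $A$, keeps the divisor effective on $V(G)\setminus\{P\}$ while moving at least one chip into the burnt region containing $P$. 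Iterating produces the $P$-reduced representative.

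The main obstacle I anticipate is proving that this reduction terminates. Naively minimizing the number of chips on $V(G)\setminus\{P\}$ does not suffice, because firing a set $A$ not adjacent to $P$ leaves that total unchanged: a direct computation using $\deg(\Delta\chi_A)=0$ gives $\sum_{Q \neq P}(D'-\Delta\chi_A)(Q) = \sum_{Q\neq P}D'(Q) - \#\{\text{edges from } P \text{ to } A\}$. Instead I would invoke the standard stabilization property of chip-firing with a fixed sink (the abelian sandpile): since each cluster-firing transfers the positive quantity $|E(A,\overline{A})| \geq 1$ of chips across the cut into the component containing $P$, and the total $\deg(D)$ is conserved, the process cannot continue indefinitely. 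Making this monotonicity rigorous across successive firings, for instance via a weighted potential that strictly decreases under every legal firing, is the technical heart of the existence half.
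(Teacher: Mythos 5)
This proposition is quoted in the paper from \cite[Proposition 3.1]{baker2007riemann} without proof, so there is nothing internal to compare against; I will assess your argument on its own terms. Your uniqueness half is complete and correct: it is the standard discrete maximum principle, and the signs work out with the paper's convention ($\Delta_Q f=\sum_{R\in\mathcal{N}(Q)}(f(Q)-f(R))\ge \outdeg_A(Q)$ on the max-set $A$, whence $D_1(Q)\ge \outdeg_A(Q)$ for all $Q\in A$, contradicting $P$-reducedness of $D_1$). The first stage of your existence half (making $D$ effective away from $P$ by firing nested distance shells) is also the standard and correct move, modulo the slightly backwards phrasing: firing a set pushes chips \emph{out} of it, so you fire sets containing $P$ to flood the rest of the graph and leave the debt at $P$.

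The genuine gap is exactly where you suspect it: termination of the Dhar iteration, and the justification you offer for it does not work. The claim that ``each cluster-firing transfers $|E(A,\overline{A})|\ge 1$ chips across the cut into the component containing $P$'' conflates ``outside $A$'' with ``toward $P$.'' When the unburnt set $A$ is not adjacent to $P$, the transferred chips land on vertices just outside $A$, which may be \emph{farther} from $P$; on the path $P$--$a$--$b$--$c$ with two chips at $b$, firing $\{b\}$ sends one chip to $a$ and one to $c$, and both your candidate monovariants ($\sum_{Q\ne P}D'(Q)$ and the distance-weighted sum $\sum_{Q}d(Q,P)D'(Q)$) are unchanged. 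Conservation of $\deg(D)$ alone therefore gives nothing. The standard repair is a lexicographic potential over distance shells: for $j\ge 0$ let $\sigma_j(D')=\sum_{d(v,P)\le j}D'(v)$. Each $\sigma_j$ is an integer bounded above by $\deg(D)$ (since $D'\ge 0$ off $P$), and if $k$ is the minimal distance from $P$ of a vertex of the fired set $A$, then $\sigma_j$ is unchanged for $j<k-1$ and $\sigma_{k-1}$ strictly increases (a shortest path from $P$ to a closest vertex of $A$ exits $A$ at distance $k-1$, so that shell receives a chip). Hence the vector $(\sigma_0,\sigma_1,\dots)$ strictly increases in lexicographic order at every legal firing and is bounded, so the process terminates. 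With that lemma inserted, your proof is complete; without it, the existence half remains a sketch.
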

	
	Following \cite[Section 4]{baker2008specialization}, a vertex $P \in V(G)$ of a graph $G$ of genus $g$, is a \emph{Weierstrass point} if $r(gP) \geq 1$.	We now state an analogue of the Weierstrass gap theorem for graphs.
	
	\begin{lemma}\cite[Lemma 4.2]{baker2008specialization}\label{lem:basics of Weierstrass}
		Let $G$ be a graph of genus $g$, and fix a vertex $P \in V(G)$.
		\begin{enumerate}
			\item $P$ is a Weierstrass point if and only if $\mathbb{N} \setminus H_r(P) \neq \{ 1,\dots,g \}$.
			\item $|\mathbb{N} \setminus H_r(P)| = g$.
			\item $\mathbb{N} \setminus H_r(P) \subseteq \{ 1,2,\dots,2g-1 \}$.
		\end{enumerate}
	\end{lemma}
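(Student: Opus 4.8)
The plan is to analyze the integer sequence $\rho(n) = r(nP)$ for $n \geq 0$, show it increases in unit steps, and then reduce the whole statement to bookkeeping about where the jumps occur. First I would record two monotonicity facts about $\rho$. The upper bound $\rho(n) \leq \rho(n-1) + 1$ is Lemma~\ref{lem:rank minus one} applied to $D = nP$, which gives $r((n-1)P) \geq r(nP) - 1$. For the lower bound $\rho(n) \geq \rho(n-1)$, note that for $n \geq 1$ both $(n-1)P$ and $P$ are effective, hence have nonnegative rank, so Lemma~\ref{lem:sum rank inequality} yields $r(nP) = r((n-1)P + P) \geq r((n-1)P)$. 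Combining these, $\rho(n) - \rho(n-1) \in \{0,1\}$ for every $n \geq 1$. Together with $\rho(0) = r(0) = 0$ (the only effective divisor linearly equivalent to $0$ is $0$ itself, so $r(0) \geq 0$, while $0 - P$ has negative degree, so $r(0) < 1$), this shows that $n \in H_r(P)$ precisely when $\rho(n) = \rho(n-1) + 1$. Telescoping then gives $\rho(N) = \#\big(H_r(P) \cap \{1,\dots,N\}\big)$ for every $N$; equivalently, the number of gaps in $\{1,\dots,N\}$ equals $N - \rho(N)$.

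Next I would use Riemann-Roch (Theorem~\ref{thm:rr graphs}) to pin down $\rho$ in the stable range. Since $\deg(K_G - nP) = 2g - 2 - n$ is negative as soon as $n \geq 2g - 1$, we have $r(K_G - nP) = -1$ there, so $\rho(n) = \deg(nP) + 1 - g = n - g$ for all $n \geq 2g - 1$. Part (3) is then immediate: for $n \geq 2g$ we get $\rho(n) - \rho(n-1) = 1$, so every such $n$ lies in $H_r(P)$ and the gap set is contained in $\{1,\dots,2g-1\}$. For part (2) I would take any $N \geq 2g - 1$ and apply the count from the first paragraph: the number of gaps in $\{1,\dots,N\}$ is $N - \rho(N) = N - (N - g) = g$, and since all gaps are at most $2g - 1 \leq N$ this is the total, so $|\mathbb{N} \setminus H_r(P)| = g$.

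Finally, part (1) follows by comparing $\rho(g)$ with the gap set on $\{1,\dots,g\}$. From the telescoping identity, $\rho(g) = g - \#\big((\mathbb{N} \setminus H_r(P)) \cap \{1,\dots,g\}\big)$, so $P$ is a Weierstrass point, i.e. $\rho(g) \geq 1$, exactly when $\{1,\dots,g\}$ is not entirely contained in the gap set. Because the gap set has cardinality exactly $g$ by part (2), containing $\{1,\dots,g\}$ forces equality with it; hence $P$ is a Weierstrass point if and only if $\mathbb{N} \setminus H_r(P) \neq \{1,\dots,g\}$. The only genuinely substantive input is Riemann-Roch in the stable range; the remainder is the unit-step monotonicity of $\rho$ and elementary counting, so I expect the main care to go into getting the endpoints right (the value $\rho(0) = 0$ and the threshold $n \geq 2g - 1$) rather than into any hard estimate.
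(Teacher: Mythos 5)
Your argument is correct and is essentially the standard proof of this statement: the paper itself does not prove this lemma but cites it from Baker's specialization paper, and the argument there is the same combination of Riemann--Roch in the stable range $n \geq 2g-1$ with the unit-step monotonicity of $n \mapsto r(nP)$ coming from Lemmas \ref{lem:sum rank inequality} and \ref{lem:rank minus one}. The only point you leave implicit is that $0 \in H_r(P)$ (since $r(0) = 0 > -1 = r(-P)$), which is needed to upgrade ``every $n \geq 2g$ is a non-gap'' to the containment $\mathbb{N} \setminus H_r(P) \subseteq \{1,\dots,2g-1\}$ and to make the gap count in part (2) exhaustive.
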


	Note that, in the classical case for curves, the inclusion $\mathbb{N} \setminus H(P) \subseteq \{ 1,\dots,2g-1 \}$ follows from $|\mathbb{N} \setminus H(P)| = g$ and the fact that $H(P)$ is a semigroup (see \cite[Lemma 2.14]{rosales2009numerical}).
	
	We now describe a binary operation on graphs that we will use frequently in Section \ref{section 4} and \ref{section 5}. Let $G_1$ and $G_2$ be two graphs and $v_1$ and $v_2$ be vertices of respectively $G_1$ and $G_2$. The \emph{vertex gluing} (or \emph{vertex identification}) of $v_1$ and $v_2$ is the graph $G$ obtained from $G_1$ and $G_2$ by identifying $v_1$ and $v_2$ as a new vertex $v$. %We note that vertex gluing is one of the operations involved in Whitney 2-Isomorphism theorem (see \cite[Section 5.3]{oxley1992matroid}).
	
	\begin{figure}[H]
		\centering
		\begin{tikzpicture}
		[scale=1,auto=left,every node/.style={circle,scale=.4,fill=black}]
		
		\node[fill=white, scale=1/.4] (G1) at (1,1.5) {$G_1$};
		\node[fill=white, scale=1/.4] (G2) at (3,1.5) {$G_2$};
		\node[fill=white, scale=1/.4] (G) at (7,1.5) {$G$};
		
		\node (n1) at (0,0) {};
		\node (n2) at (0,1) {};
		\node (n3) at (1,0) {};
		\node (n4) at (1,1) {};
		\node (n5) at (1.75,0.5) {};
		\node[fill=white, scale=1/.4] (v1) at (1.80,0) {$v_1$};
		
		\node (m1) at (2.5,0.5) {};
		\node[fill=white, scale=1/.4] (v2) at (2.5,0) {$v_2$};
		\node (m2) at (3,1) {};
		\node (m3) at (3,0) {};
		\node (m4) at (3.5,0.5) {};
		
		\draw [->] (4,0.5) -- (5,0.5);
		
		\node (N1) at (5.5,0) {};
		\node (N2) at (5.5,1) {};
		\node (N3) at (6.5,0) {};
		\node (N4) at (6.5,1) {};
		\node (N5) at (7.25,0.5) {};
		\node[fill=white, scale=1/.4] (v) at (7.30,0) {$v$};
		
		\node (M1) at (7.25,0.5) {};
		\node (M2) at (7.75,1) {};
		\node (M3) at (7.75,0) {};
		\node (M4) at (8.25,0.5) {};
		
		\foreach \from/\to in {n1/n2,n1/n3,n2/n4,n3/n4,n3/n5,n4/n5,m1/m2,m1/m3,m2/m4,m3/m4,
		N1/N2,N1/N3,N2/N4,N3/N4,N3/N5,N4/N5,M1/M2,M1/M3,M2/M4,M3/M4}
		\draw (\from) -- (\to);
		\end{tikzpicture}
		%\caption{}
	\end{figure}
	
	\section{The inclusion $H_r(P) \subseteq H_f(P)$}
	
	In this section, we will assume that $G$ is a simple graph. We will prove the inclusion $H_r(P) \subseteq H_f(P)$ for every vertex $P \in V(G)$. First, we will need a series of lemmas, inspired by the Cori-Le Borgne algorithm \cite[Proposition 2]{cori2016rank} for the rank of divisors of a complete graph. 
	
	\begin{lemma}\label{lem:P reduced zero}
		Fix a vertex $P \in V(G)$ and let $D$ be a $P$-reduced divisor on $G$. There exists a neighbour $Q \in V(G) \setminus \{P\}$ of $P$ such that $D(Q) = 0$.
	\end{lemma}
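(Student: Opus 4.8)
The plan is to apply the defining property of a $P$-reduced divisor to the largest admissible subset, namely $A = V(G) \setminus \{P\}$. First I would note that since $G$ has more than one vertex this set is non-empty, and it is trivially contained in $V(G) \setminus \{P\}$, so the $P$-reduced hypothesis applies to it and yields a vertex $Q \in A$ with $\outdeg_A(Q) > D(Q)$. Because $D$ is effective away from $P$ we have $D(Q) \geq 0$, and hence $\outdeg_A(Q) \geq 1$.

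The crucial step is to evaluate $\outdeg_A(Q)$ for this particular $A$. Since $A = V(G) \setminus \{P\}$, the only vertex lying outside $A$ is $P$ itself, so $\outdeg_A(Q)$ counts exactly the edges joining $Q$ to $P$. This is where simplicity enters: between $Q$ and $P$ there is at most one edge, so $\outdeg_A(Q) \leq 1$. Combined with $\outdeg_A(Q) \geq 1$ this forces $\outdeg_A(Q) = 1$, which means $Q$ is a neighbour of $P$; and then the inequality $1 > D(Q) \geq 0$ forces $D(Q) = 0$. This $Q$ is precisely the neighbour we are looking for, and the argument is complete.

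The proof is short, and the only genuine design choice is the selection of the subset $A$; the rest is forced. I expect the main (and essentially only) subtlety to be recognizing that the hypothesis of simplicity is exactly what bounds $\outdeg_A(Q)$ by one. On a general multigraph the same subset would only give $D(Q) \leq \outdeg_A(Q) - 1$, which need not vanish if there are several edges from $Q$ to $P$. Conceptually the statement is the first step of Dhar's burning algorithm started at $P$: for a $P$-reduced divisor the fire must spread past $P$, and on a simple graph a neighbour can ignite at the first step only if its coefficient is zero.
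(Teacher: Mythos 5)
Your proof is correct and is essentially the paper's argument run in the direct rather than contrapositive direction: the paper also takes $A = V(G)\setminus\{P\}$, uses simplicity to get $\outdeg_A(Q)=1$ for neighbours of $P$ (and $0$ for non-neighbours), and derives a contradiction with $P$-reducedness if every neighbour had positive coefficient. Your version, which extracts the witness vertex $Q$ directly and pins down $\outdeg_A(Q)=1$ and $D(Q)=0$, is the same idea and equally valid.
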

	\begin{proof}
		Set $A = V(G) \setminus \{ P \}$ and let $\mathcal{N}(P) \subseteq A$ be the set of neighbours of $P$. Assume by contradiction that $D(Q) \geq 1$ for all $Q \in \mathcal{N}(P)$. Since $G$ is simple, we have $\operatorname{outdeg}_A(Q) = 1$ for all $Q \in \mathcal{N}(P)$. This implies $D(Q) \geq \outdeg_A(Q)$ for all $Q \in A$, contradicting the fact that $D$ is $P$-reduced.
	\end{proof}
	
	Let $D$ be a divisor on $G$ of rank $r$. Using the same terminology as in \cite{cori2016rank, dadderio2018sandpile}, a \emph{proof} for the rank of $D$ is an effective divisor $E$ of degree $r+1$ with $|D-E| = \emptyset$. We denote by $\Proof(D)$ the set of proofs of $D$. Note that if $D \sim D'$, then $\Proof(D) = \Proof(D')$.
	
	%A vertex $P \in V(G)$ is a \emph{proof vertex} for a divisor $D$ if there exists some $A \in \Proof(D)$ such that $A(P) > 0$. Note that for every proof vertex $P$ of a divisor $D$ we have $r(D-P) = r(D)-1$. In fact, if $A \in \Proof(D)$ is such that $A(P) > 0$, then $A-P \in \Proof(D-P)$.
	
	\begin{lemma}\label{lem:proof zero base case}
		Fix a vertex $P \in V(G)$ and let $D$ be a $P$-reduced divisor on $G$ of rank zero. We have $\Proof(D) \setminus \{P\} \neq \emptyset$.
	\end{lemma}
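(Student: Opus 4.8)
The plan is to produce an explicit proof of $D$ other than $P$. Since $r(D)=0$, a proof of $D$ is an effective divisor of degree $r(D)+1=1$, hence a single vertex $Q$ with $|D-Q|=\emptyset$; in particular $\Proof(D)\subseteq V(G)$, and $\Proof(D)\neq\emptyset$ because $r(D)=0$ implies that some vertex $Q$ satisfies $|D-Q|=\emptyset$. Moreover $D$ is $P$-reduced with $|D|\neq\emptyset$, so $D(P)\geq 0$ and $D$ is effective. If $D(P)\geq 1$, then $D-P\geq 0$ is effective, so $|D-P|\neq\emptyset$ and $P\notin\Proof(D)$; any element of the nonempty set $\Proof(D)$ then lies in $\Proof(D)\setminus\{P\}$, and we are done.

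In the remaining case $D(P)=0$, I would use Lemma \ref{lem:P reduced zero} to choose a neighbour $Q_0$ of $P$ with $D(Q_0)=0$, and claim $Q_0\in\Proof(D)$. The key step is to show that $D-Q_0$ is itself $Q_0$-reduced. It is effective off $Q_0$ since $(D-Q_0)(v)=D(v)\geq 0$ for $v\neq Q_0$. For the outdegree condition, fix a nonempty $A\subseteq V(G)\setminus\{Q_0\}$; I must find $R\in A$ with $\outdeg_A(R)>(D-Q_0)(R)=D(R)$. If $P\notin A$ then $A\subseteq V(G)\setminus\{P\}$, and the $P$-reducedness of $D$ gives such an $R$ directly, as $\outdeg_A(R)$ does not depend on the divisor. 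If $P\in A$, I take $R=P$: since $Q_0\notin A$ is a neighbour of $P$, the edge $PQ_0$ joins $P$ to a vertex outside $A$, whence $\outdeg_A(P)\geq 1>0=D(P)$.

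Once $D-Q_0$ is known to be $Q_0$-reduced, its $Q_0$-coefficient equals $-1<0$, so its class contains no effective divisor; that is, $|D-Q_0|=\emptyset$. This uses the standard fact, immediate from uniqueness of reduced divisors, that a divisor class contains an effective divisor if and only if its reduced representative at a chosen vertex is nonnegative there. Therefore $Q_0\in\Proof(D)\setminus\{P\}$, as desired.

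I expect the only subtle point to be verifying the outdegree inequality when $P\in A$, where the $P$-reduced hypothesis gives no information about $A$. The resolution is that $P$ itself serves as the witness through its edge to $Q_0$; this is exactly where the hypotheses $D(P)=0$ and the simplicity of $G$ (which, via Lemma \ref{lem:P reduced zero}, furnishes the neighbour $Q_0$ with $D(Q_0)=0$) are used.
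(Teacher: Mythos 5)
Your proof is correct and follows essentially the same route as the paper: treat the case $D(P)\geq 1$ directly, and otherwise use Lemma \ref{lem:P reduced zero} to find a neighbour $Q_0$ of $P$ with $D(Q_0)=0$, verify that $D-Q_0$ is $Q_0$-reduced by splitting on whether $P\in A$, and conclude $|D-Q_0|=\emptyset$ from the negative coefficient at $Q_0$. The only difference is that you spell out more explicitly the standard facts the paper leaves implicit (why $\Proof(D)\neq\emptyset$ and why a reduced representative that is negative at the base vertex forces an empty linear system), which is fine.
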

	\begin{proof}
		If $D(P) > 0$, then $P \notin \Proof(D) \neq \emptyset$. Now assume $D(P) = 0$, from Lemma \ref{lem:P reduced zero} there exists a neighbour $Q$ of $P$ such that $D(Q) = 0$. The divisor $D' = D-Q$ is $Q$-reduced. In fact, let $A \subseteq V(G) \setminus \{Q\}$: if $P \notin A$ then, since $D$ is $P$-reduced, we have $\outdeg_A(v) > D(v) = D'(v)$ for some $v \in A$; otherwise if $P \in A$, then $\outdeg_A(P) \geq 1 > 0 = D(P) = D'(P)$. Finally, since $D'(Q) < 0$ and $D'$ is $Q$-reduced, it follows that $Q$ is a proof for $D$ with $Q \neq P$.
	\end{proof}
	
	%The previous proof use the following fact: if D is Q-reduced and D(Q)<0, then D is not linearly equivalent to an effective divisor. Suppose by contradiction that E > 0 is linearly equivalent to D, if E is Q-reducedd we get a contradiction (since there exists a unique Q-reduced divisor up to linear equivalence), otherwise there exists A \subseteq V(G) \setminus Q such that outdeg_A(P) \leq D(P) for every P in A. Therefore, we can "fire the set A". Now we obtain an effective divisor E', and we repeat the procedure for E'. Since at every step we have an effective divisor, but D is not effective, we have a contradiction

	%If D is P-reduced of rank 0, not every vertex with D(v) = 0 is a proof for D. An example is the graph K_{2,3}, with one leaf P attached to a vertex of degree 3. Consider the divisor D such that D(v) = 1 if v=P, 0 otherwise, let Q be the vertex of degree 4, then D(Q) = 0 but Q is not a proof for D
	
	\begin{lemma}\label{lem:proof with no vertex}
		Let $D$ be a divisor on $G$. For every vertex $P \in V(G)$ there exists $E \in \Proof(D)$ such that $E(P) = 0$.
	\end{lemma}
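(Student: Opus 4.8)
The plan is to start from an arbitrary proof of $D$ and repeatedly modify it so as to decrease its coefficient at $P$ by one, until that coefficient becomes $0$. A naive induction on $r(D)$ — peel off a vertex $Q$ with $r(D-Q)=r(D)-1$ (which exists by Lemma \ref{lem:rank minus one}) and recurse — does not obviously work, since that rank-reducing vertex might be forced to be $P$; so instead I would induct on the coefficient at $P$ of a proof. A proof always exists: by the definition of the rank, at degree $r+1$ (where $r=r(D)$) there is an effective divisor $E$ with $|D-E|=\emptyset$, so $\Proof(D)\neq\emptyset$. It therefore suffices to establish a single reduction step: given $E\in\Proof(D)$ with $m:=E(P)\geq 1$, produce $E'\in\Proof(D)$ with $E'(P)=m-1$. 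Iterating this step $m$ times yields a proof with vanishing coefficient at $P$.

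For the reduction step, since $E(P)\geq 1$ I would split off one copy of $P$, writing $E=E_0+P$ with $E_0:=E-P$ effective of degree $r$. The key sub-claim is that $r(D-E_0)=0$. Indeed, $\deg E_0=r$ and $r(D)=r$ force $|D-E_0|\neq\emptyset$, so $r(D-E_0)\geq 0$; on the other hand $|D-E_0-P|=|D-E|=\emptyset$, and Lemma \ref{lem:rank minus one} gives $r(D-E_0-P)\geq r(D-E_0)-1$, whence $r(D-E_0)\leq 0$. Thus $D-E_0$ has rank exactly $0$.

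Now I would pass to the unique $P$-reduced divisor $R\sim D-E_0$, which also has rank $0$, and apply Lemma \ref{lem:proof zero base case} to obtain a proof $w\in\Proof(R)$ with $w\neq P$. Since proofs depend only on the linear equivalence class, $w\in\Proof(D-E_0)$, i.e.\ $w$ is a single vertex with $|D-E_0-w|=\emptyset$. Setting $E':=E_0+w$ gives an effective divisor of degree $r+1$ with $|D-E'|=\emptyset$, hence $E'\in\Proof(D)$; and because $w\neq P$ we get $E'(P)=E_0(P)=m-1$, completing the reduction step.

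The main obstacle is the sub-claim $r(D-E_0)=0$, since it is what licenses invoking the rank-zero base case in Lemma \ref{lem:proof zero base case}; it rests on combining the defining property of the rank (to get $\geq 0$) with Lemma \ref{lem:rank minus one} (to get $\leq 0$). The only other point needing care is the bookkeeping that $\Proof$ is invariant under linear equivalence, so that the rank-zero lemma applied to the $P$-reduced representative $R$ transfers back to $D-E_0$. Finally I would observe that the cases $r(D)=-1$ (where the zero divisor is already a proof) and $r(D)=0$ (where the statement is exactly Lemma \ref{lem:proof zero base case}) are subsumed by this iteration, corresponding to starting with $m=0$ or performing a single reduction step.
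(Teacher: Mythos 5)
Your proof is correct, and it reaches the key rank-zero reduction by a slightly different route than the paper. Both arguments hinge on the same two facts: that if $E_0$ is effective of degree $r=r(D)$ and extends to a proof, then $r(D-E_0)=0$, and that Lemma \ref{lem:proof zero base case} then supplies a completing vertex different from $P$. The difference is how the degree-$r$ divisor $E_0$ is produced. The paper inducts downward on the rank: it picks $P'$ with $r(D-P')=r-1$ via Lemma \ref{lem:rank minus one}, obtains by induction a proof of $D-P'$ avoiding $P$ (this is the degree-$r$ divisor), and appends one more non-$P$ vertex. You instead start from an arbitrary proof $E$ of $D$ and induct on $E(P)$, deleting one copy of $P$ and replacing it by a non-$P$ vertex; your verification that $r(D-E_0)=0$ (via the definition of rank for the lower bound and Lemma \ref{lem:rank minus one} for the upper bound) is exactly right. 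One remark: your stated worry about the ``naive induction on $r(D)$'' is unfounded for the paper's actual argument --- the rank-reducing vertex $P'$ is allowed to equal $P$, because $P'$ is never inserted into the proof being constructed; only the inductively obtained proof of $D-P'$ (which avoids $P$) and the final vertex from Lemma \ref{lem:proof zero base case} are. Your ``swap out copies of $P$'' scheme buys a proof that works from any starting proof and makes the mechanism very explicit, at the cost of an extra (harmless) iteration; the paper's version is marginally shorter because the induction hypothesis hands it the degree-$r$ divisor already free of $P$.
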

	\begin{proof}
		Without loss of generality, we can assume that $D$ is $P$-reduced. We proceed by induction on the rank $r$ of $D$. The case $r = -1$ is trivial. If $r=0$ the assertion follows from Lemma \ref{lem:proof zero base case}.
		
		Now suppose that $D$ has rank $r \geq 1$ and assume the statement for divisors of rank $r-1$. From Lemma \ref{lem:rank minus one} we have $r(D-P') = r-1$ for some vertex $P' \in V(G)$. By the inductive hypothesis there exists $E' \in \Proof(D-P')$ such that $E'(P) = 0$. Now apply Lemma \ref{lem:proof zero base case} to the $P$-reduced divisor equivalent to $D-E'$. Thus there exists $Q \in \Proof(D-E')$ with $Q \neq P$. We conclude by noting that $E = E'+Q \in \Proof(D)$ and $E(P) = 0$.
	\end{proof}
	
	%Do the previous lemmas give some Le Borgne-type algorithm to compute the rank of a divisor?
	
	Now we prove the main result of the section. We will follow the proof outlined in \cite[Theorem 2.4]{kang2019laplacian} in which the previous Lemma \ref{lem:proof with no vertex} was the key step missing.
	
	\begin{theorem}\label{thm:functional contains rank}
		Let $G$ be a simple graph. For every $P \in V(G)$ we have $H_r(P) \subseteq H_f(P)$.
	\end{theorem}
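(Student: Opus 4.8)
The plan is to first translate the functional condition into the language of linear systems, and then feed the rank jump at $P$ into Lemma \ref{lem:proof with no vertex}. The dictionary I would record is this: a function $f \in \mathcal{M}(G)$ has a unique pole of order $n$ at $P$ exactly when its associated principal divisor has the form $\Delta f = E' - nP$ with $E'$ effective and $E'(P) = 0$ (so that the negative part of $\Delta f$ is precisely $-nP$). Equivalently, $E' := \Delta f + nP \in |nP|$ is an effective representative of $nP$ whose support avoids $P$. Hence $n \in H_f(P)$ if and only if there exists $E' \in |nP|$ with $E'(P) = 0$, and the entire theorem reduces to producing, for each $n \in H_r(P)$, such a representative. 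The case $n = 0$ is trivial (take $E' = 0$), so I would assume $n \geq 1$.

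Next I would pin down the shape of the rank jump. Assume $n \in H_r(P)$, so $r(nP) > r((n-1)P)$. Applying Lemma \ref{lem:rank minus one} to the divisor $nP$ gives $r((n-1)P) = r(nP - P) \geq r(nP) - 1$, which together with the strict inequality forces $r(nP) = r((n-1)P) + 1$. Write $r = r((n-1)P)$; since $(n-1)P$ is itself effective for $n \geq 1$, we have $r \geq 0$, and moreover $r \leq \deg((n-1)P) = n-1$.

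Now I would invoke the key lemma. By Lemma \ref{lem:proof with no vertex} there is a proof $E$ of $(n-1)P$ with $E(P) = 0$; that is, $E$ is effective of degree $r+1$ with $|(n-1)P - E| = \emptyset$. Because $r(nP) = r+1$ and $\deg E = r+1$, the definition of rank yields $|nP - E| \neq \emptyset$, so I may select an effective divisor $F \sim nP - E$ (its degree $n - (r+1) \geq 0$ by the bound above). The crucial observation is that $F(P) = 0$: otherwise $F - P$ would be effective and linearly equivalent to $(n-1)P - E$, contradicting $|(n-1)P - E| = \emptyset$. Consequently $E + F$ is effective, linearly equivalent to $nP$, and satisfies $(E+F)(P) = E(P) + F(P) = 0$. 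By the dictionary of the first paragraph, this shows $n \in H_f(P)$.

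The conceptual weight of the argument sits entirely in Lemma \ref{lem:proof with no vertex}: producing a proof of $(n-1)P$ that avoids $P$ is exactly what keeps $|(n-1)P - E|$ empty while $|nP - E|$ becomes nonempty, and this single-$P$ gap is what the contradiction step exploits to force $F(P) = 0$. Given that lemma, I expect no genuine obstacle; the only remaining care is the bookkeeping already noted, namely that $\deg F \geq 0$ and the degenerate case $n = 0$. The step I would double-check most carefully is the initial dictionary, making sure the sign convention for $\Delta f$ correctly identifies ``unique pole of order $n$ at $P$'' with an effective representative of $|nP|$ disjoint from $P$.
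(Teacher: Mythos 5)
Your proposal is correct and follows essentially the same route as the paper: both arguments hinge on Lemma \ref{lem:proof with no vertex} to produce a proof $E$ of $(n-1)P$ with $E(P)=0$, and then use the rank jump to get an effective representative of $nP-E$ which must avoid $P$ since $|(n-1)P-E|=\emptyset$. Your divisor-language formulation (exhibiting $E+F\in|nP|$ supported away from $P$) is just a rephrasing of the paper's explicit construction of the function $f$.
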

	\begin{proof}
		Let $n \in H_r(P)$. By Lemma \ref{lem:proof with no vertex}, there exists an effective divisor $E \in \Proof((n-1)P)$ such that $E(P) = 0$. By the choice of $E$ and since $r(nP) > r((n-1)P)$, there exists a function $f \in \mathcal{M}(G)$ such that
		\begin{align*}
			(n-1)P - E + \Delta f \ngeq 0, \\
			nP - E + \Delta f \geq 0.
		\end{align*}
	 	This, together with the fact that $E(P) = 0$, implies that $f$ has a unique pole of order $n$ at $P$, that is $n \in H_f(P)$.
	\end{proof}
	
	\begin{comment}
	
	\begin{lemma}\label{lem:banana graph}
		Let $B_n$ be the graph with two vertices connected by $n$ edges. Then $H_f(P) = n \mathbb{N}$ for every vertex $P$ of $B_n$.
	\end{lemma}
	\begin{proof}
		Let $f \in \mathcal{M}(G)$ with $f(P_1) = a$ and $f(P_2) = b$. Then
		\[ \Delta(f) = n( (a-b)P_1+(b-a)P_2), \]
		therefore $H_f(P_i) = n \mathbb{N}$ for $i = 1,2$.
	\end{proof}
	
	\end{comment}
	
	\begin{remark}\label{rmk:banana graph}
		
		In general, Theorem \ref{thm:functional contains rank} fails when $G$ has just one vertex $P$ (in which case we have $H_f(P) = \{0\}$ and $H_r(P) = \mathbb{N}$) and when $G$ has multiple edges. An example of the last statement is given by the multigraph $B_n$ with two vertices connected by $n$ edges. For every vertex $P \in V(B_n)$, it results $H_f(P) = n \mathbb{N}$ and $H_r(P) = \mathbb{N} \setminus \{ 1,\dots,n-1 \}$, hence $H_r(P) \nsubseteq H_f(P)$.
	\end{remark}
	
	Following the strategy outlined in \cite{kang2019laplacian}, as an application of Theorem \ref{thm:functional contains rank} we calculate the rank Weierstrass set of complete and complete bipartite graphs from their functional Weierstrass set. In fact, in these two cases we have $H_r(P) = H_f(P)$ for every vertex $P$ of the graph.
	
	\begin{lemma}\cite[Porism 2.11]{kang2019laplacian}\label{lem:minimum}
		Let $G$ be a simple graph, let $P \in V(G)$ be a vertex and let $G-P$ be the graph $G$ with the vertex $P$ and its adjacent edges removed. If $G-P$ is connected and $f \in \mathcal{M}(G)$ is a function with a unique pole at $P$, then $f(P) < f(Q)$ for every $Q \in V(G)$.
	\end{lemma}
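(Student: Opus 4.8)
The plan is to argue via a discrete minimum principle for the Laplacian. First I would unwind the hypothesis: saying that $f$ has a unique pole at $P$ means that the principal divisor $\Delta f$ has negative coefficient only at $P$, i.e.\ $\Delta_P f < 0$ while $\Delta_Q f \geq 0$ for every $Q \in V(G) \setminus \{P\}$. In particular $f$ is not constant, since otherwise $\Delta f = 0$ and $f$ would have no pole at all. Let $m = \min_{Q \in V(G)} f(Q)$ and let $S = \{ Q \in V(G) : f(Q) = m \}$ be the nonempty set of vertices on which $f$ attains its minimum; the goal is to prove $S = \{P\}$, which is exactly the assertion that $f(P) < f(Q)$ for all $Q \neq P$.

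The key step is the following local observation, playing the role of the maximum principle for harmonic functions. If $Q \in S$ and $Q \neq P$, then from $\Delta_Q f = \sum_{R \in \mathcal{N}(Q)} (f(Q) - f(R)) \geq 0$ together with $f(Q) = m \leq f(R)$ for every neighbour $R$, each summand is $\leq 0$ while their sum is $\geq 0$; hence every summand vanishes and $f(R) = m$ for all $R \in \mathcal{N}(Q)$. In other words, every minimum vertex \emph{other than} $P$ has all of its neighbours again in $S$.

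With this in hand I would finish by a propagation argument in two cases. If $P \notin S$, then every vertex of $S$ is distinct from $P$, so by the local observation $S$ is closed under taking neighbours in $G$; since $G$ is connected and $S \neq \emptyset$, this forces $S = V(G) \ni P$, a contradiction. Hence $P \in S$. Now suppose, for contradiction, that $S \neq \{P\}$, so that $S \setminus \{P\}$ is a nonempty subset of $V(G-P)$. For any $Q \in S \setminus \{P\}$ the local observation puts all $G$-neighbours of $Q$ into $S$, and in particular all neighbours of $Q$ in $G-P$ lie in $S \setminus \{P\}$. Thus $S \setminus \{P\}$ is closed under taking neighbours in $G-P$; since $G-P$ is connected by hypothesis, $S \setminus \{P\} = V(G-P)$, so $S = V(G)$ and $f$ is constant, contradicting the presence of a pole. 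Therefore $S = \{P\}$, as desired.

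The main obstacle is to see precisely where the connectedness of $G-P$ (rather than that of $G$) enters: the first case only rules out that the minimum is attained away from $P$, and it is the second propagation argument, taking place inside $G-P$, that upgrades ``$P$ is a minimum vertex'' to ``$P$ is the unique minimum vertex.'' One should be careful that the two applications of the local observation use different ambient graphs ($G$ in the first case, $G-P$ in the second), and that the degenerate constant case is correctly excluded by the existence of the pole at $P$.
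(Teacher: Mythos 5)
Your proof is correct. The paper itself does not prove this lemma --- it is quoted from \cite[Porism 2.11]{kang2019laplacian} --- but your argument is the natural one and matches what that reference does: translate ``unique pole at $P$'' into $\Delta_P f<0$ and $\Delta_Q f\geq 0$ for $Q\neq P$, apply the discrete minimum principle (a minimizer $Q\neq P$ forces all its neighbours to be minimizers), use connectedness of $G$ to place $P$ in the minimizing set, and then use connectedness of $G-P$ to rule out any other minimizer, the constant case being excluded by $\Delta f\neq 0$. The two-stage propagation (first in $G$, then in $G-P$) is exactly where the hypothesis on $G-P$ is needed, and you have identified that correctly.
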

	
	Let $n \geq 1$ and consider the complete graph $K_{n+1}$.
	
	\begin{lemma}\cite[Proposition 3.7]{kang2019laplacian}\label{lem:functional complete}
		For every vertex $P \in V(K_{n+1})$, we have $H_f(P) = \langle n,n+1 \rangle$.
	\end{lemma}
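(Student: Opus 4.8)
The plan is to translate the condition ``$f$ has a unique pole of order $k$ at $P$'' into an explicit system of linear inequalities using the special structure of $K_{n+1}$, and then to match its solvability with membership in $\langle n,n+1 \rangle$. I label the vertices $V(K_{n+1}) = \{P, Q_1, \dots, Q_n\}$, noting that every vertex has degree $n$ and every two vertices are adjacent. Since $\Delta f$ is unchanged by adding a constant to $f$, and since $K_{n+1} - P = K_n$ is connected, Lemma \ref{lem:minimum} lets me normalise $f(P) = 0$ and guarantees $a_i := f(Q_i) \geq 1$. A direct computation then gives $\Delta_P f = -\sum_{i=1}^n a_i$ and $\Delta_{Q_i} f = (n+1)a_i - \sum_{j=1}^n a_j$. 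Recalling that a unique pole of order $k$ at $P$ means $\Delta_P f = -k$ together with $\Delta_Q f \geq 0$ for every $Q \neq P$, this shows that $f$ realises such a pole exactly when $\sum_{i=1}^n a_i = k$ and $(n+1)a_i \geq k$ for every $i$. Thus for $k \geq 1$ I have reduced the statement to: $k \in H_f(P)$ if and only if there exist integers $a_1, \dots, a_n$ with $\sum_i a_i = k$ and $a_i \geq k/(n+1)$ for all $i$.

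For the inclusion $\langle n,n+1 \rangle \subseteq H_f(P)$ I would first record that $H_f(P)$ is an additive submonoid of $\mathbb{N}$: it contains $0$ (the constant function), and if $f,g$ realise poles of orders $k_1,k_2$ at $P$ then $\Delta(f+g) = \Delta f + \Delta g$ realises a pole of order $k_1+k_2$, since the off-$P$ coefficients remain nonnegative. It then suffices to exhibit the two generators. Taking $a_1 = \dots = a_n = 1$ gives $\sum_i a_i = n$ with $(n+1)\cdot 1 \geq n$, so $n \in H_f(P)$; taking $a_1 = 2$ and $a_2 = \dots = a_n = 1$ gives $\sum_i a_i = n+1$ with each $(n+1)a_i \geq n+1$, so $n+1 \in H_f(P)$. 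Closure under addition yields $\langle n,n+1 \rangle \subseteq H_f(P)$.

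The reverse inclusion is where the real content lies, and I expect it to be the main obstacle. Suppose $k \in H_f(P)$ with $k \geq 1$ and pick $a_1, \dots, a_n$ as above. Set $m = \lceil k/(n+1) \rceil$, the least integer with $(n+1)m \geq k$. Each constraint $a_i \geq k/(n+1)$ forces the integer $a_i \geq m$, whence $k = \sum_i a_i \geq nm$; combined with $k \leq (n+1)m$ this gives $nm \leq k \leq (n+1)m$. Writing $k = nm + \beta$ with $0 \leq \beta \leq m$, I obtain $k = (m-\beta)n + \beta(n+1) \in \langle n,n+1 \rangle$. The crux is the elementary but essential description of $\langle n,n+1 \rangle$ as the staircase $\{\, nm + \beta : m \geq 0,\ 0 \leq \beta \leq m \,\}$, which is exactly what converts the pair of inequalities $nm \leq k \leq (n+1)m$ into a semigroup membership. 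Verifying that this staircase really coincides with $\langle n,n+1 \rangle$, and observing that it is the \emph{integrality} of the $a_i$ (and not merely $a_i \geq k/(n+1)$ over the reals, which is always solvable and would force $H_f(P) = \mathbb{N}$) that produces the gaps, is the step demanding the most care.
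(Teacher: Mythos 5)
Your proof is correct: the normalisation $f(P)=0$, the reduction to the integer system $\sum_i a_i = k$ with $(n+1)a_i \geq k$, and its resolution via $m=\lceil k/(n+1)\rceil$ together with the staircase description of $\langle n,n+1\rangle$ all check out, including the essential use of the integrality of the $a_i$ to produce the gaps. The paper itself only cites this result from \cite{kang2019laplacian} without proof, but your computation is exactly parallel to the one the paper does carry out for the bipartite analogue (Lemma \ref{lem:functional complete bipartite}), so this is essentially the same approach.
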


	\begin{corollary}\label{cor:rank complete}
		For every vertex $P \in V(K_{n+1})$, we have $H_r(P) = \langle n,n+1 \rangle$. 
	\end{corollary}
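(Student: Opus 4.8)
The plan is to combine the two main ingredients already assembled in this section: Theorem~\ref{thm:functional contains rank}, which gives the inclusion $H_r(P) \subseteq H_f(P)$ for simple graphs, and Lemma~\ref{lem:functional complete}, which computes $H_f(P) = \langle n, n+1 \rangle$ for the complete graph $K_{n+1}$. Since $K_{n+1}$ is a simple graph (it has $n+1 \geq 2$ vertices and no multiple edges), Theorem~\ref{thm:functional contains rank} immediately yields
\[ H_r(P) \subseteq H_f(P) = \langle n, n+1 \rangle. \]
So the whole content of the corollary reduces to proving the reverse inclusion $\langle n, n+1 \rangle \subseteq H_r(P)$.

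For the reverse inclusion, the cleanest route is to exploit the general structure of $H_r(P)$. First I would observe that $0 \in H_r(P)$ trivially (since $r(0) = 0 > -1 = r(-P)$), so it suffices to show every nonzero element of $\langle n, n+1 \rangle$ lies in $H_r(P)$. The key point is that $H_r(P)$ is closed under addition away from $0$: this is essentially the rank superadditivity of Lemma~\ref{lem:sum rank inequality}. Concretely, I expect to show that if the generators $n$ and $n+1$ both belong to $H_r(P)$, then so does any nonnegative integer combination, by using that $r((a+b)P) \geq r(aP) + r(bP)$ together with the gap-counting constraints. A more direct approach, and probably the one I would actually carry out, uses Lemma~\ref{lem:basics of Weierstrass}: the genus of $K_{n+1}$ is $g = |E| - |V| + 1 = \binom{n+1}{2} - (n+1) + 1 = \binom{n}{2}$, and the complement $\mathbb{N} \setminus \langle n, n+1 \rangle$ has a known, easily counted form (the gaps of $\langle n, n+1 \rangle$). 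I would verify that this gap set has cardinality exactly $\binom{n}{2} = g$.

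The argument then closes by a counting/containment squeeze. Since $H_r(P) \subseteq \langle n, n+1 \rangle$, we have the reverse containment on complements $\mathbb{N} \setminus \langle n, n+1 \rangle \subseteq \mathbb{N} \setminus H_r(P)$. But Lemma~\ref{lem:basics of Weierstrass}(2) tells us $|\mathbb{N} \setminus H_r(P)| = g$, and the gap count of $\langle n, n+1 \rangle$ is also exactly $g$. A containment of finite sets of equal cardinality forces equality, so $\mathbb{N} \setminus H_r(P) = \mathbb{N} \setminus \langle n, n+1 \rangle$, whence $H_r(P) = \langle n, n+1 \rangle$.

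The main obstacle I anticipate is the combinatorial bookkeeping of the gap count: I need the number of gaps of the numerical semigroup $\langle n, n+1 \rangle$ to match $g = \binom{n}{2}$ on the nose. This is the Frobenius/genus computation for a semigroup generated by two consecutive integers, and while it is standard, getting it exactly right is where an error would most likely creep in. Everything else is a formal consequence of the theorem and lemmas already established, so once the genus of $K_{n+1}$ and the gap count of $\langle n, n+1 \rangle$ are both confirmed to equal $\binom{n}{2}$, the corollary follows immediately from the cardinality squeeze.
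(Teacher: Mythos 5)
Your proposal is correct and follows essentially the same route as the paper: the inclusion $H_r(P) \subseteq H_f(P) = \langle n,n+1\rangle$ from Theorem~\ref{thm:functional contains rank} and Lemma~\ref{lem:functional complete}, followed by the cardinality squeeze $|\mathbb{N}\setminus H_r(P)| = g(K_{n+1}) = |\mathbb{N}\setminus\langle n,n+1\rangle|$ via Lemma~\ref{lem:basics of Weierstrass}. The genus count you flag as the main risk does work out: both quantities equal $\binom{n}{2}$.
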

	\begin{proof}
		By Lemma \ref{lem:functional complete} and Theorem \ref{thm:functional contains rank} we have $H_r(P) \subseteq H_f(P) = \langle n,n+1 \rangle$. Finally, from Lemma \ref{lem:basics of Weierstrass} we have $|\mathbb{N} \setminus H_r(P)| = g(K_{n+1}) = |\mathbb{N} \setminus \langle n,n+1 \rangle|$.
	\end{proof}
	
	Now let $n, m \geq 1$ and consider the complete bipartite graph $K_{n.m}$. The proof of the following lemma is inspired by the proof of \cite[Proposition 3.7]{kang2019laplacian}.
	
	\begin{lemma}\label{lem:functional complete bipartite}
		Let $P \in V(K_{m,n})$ be a vertex of degree $n$, we have
		\[ H_f(P) = n \mathbb{N} \cup (n(m-1) + \mathbb{N}) \]
	\end{lemma}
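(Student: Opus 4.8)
The plan is to translate the condition defining $H_f(P)$ into a system of linear inequalities over the values of $f$, and then determine exactly which pole orders $k$ admit a solution. I write the part of $K_{m,n}$ containing $P$ as $\{P = a_1, a_2, \dots, a_m\}$ (these vertices have degree $n$) and the opposite part as $\{b_1, \dots, b_n\}$ (these have degree $m$). For $f \in \mathcal{M}(G)$ I set $x_i = f(a_i)$, $y_j = f(b_j)$, $S_A = \sum_i x_i$ and $S_B = \sum_j y_j$. A direct computation of the Laplacian gives $\Delta_{a_i} f = n x_i - S_B$ and $\Delta_{b_j} f = m y_j - S_A$. Since adding a constant to $f$ leaves $\Delta f$ unchanged, I normalize $x_1 = 0$. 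Then ``$f$ has a unique pole of order $k$ at $P$'' becomes the system $S_B = k$ (coming from $\Delta_{a_1} f = -k$), together with $n x_i \geq S_B$ for $i \geq 2$ and $m y_j \geq S_A$ for all $j$.

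For the inclusion $H_f(P) \subseteq n\mathbb{N} \cup (n(m-1) + \mathbb{N})$, suppose such an $f$ exists and set $c = \lceil k/n\rceil$. The inequalities $n x_i \geq S_B = k$ force $x_i \geq c$ for every $i \geq 2$, hence $S_A \geq (m-1)c$. Feeding this into $m y_j \geq S_A$ gives $y_j \geq \lceil (m-1)c/m \rceil = c - \lfloor c/m\rfloor$ for each $j$ individually; summing over $j$ yields $k = S_B \geq n\bigl(c - \lfloor c/m\rfloor\bigr)$. If $n \nmid k$ then $k < nc$, so the displayed inequality forces $\lfloor c/m \rfloor \geq 1$, that is $c \geq m$, that is $k > n(m-1)$. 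Thus every $k \notin n\mathbb{N}$ that occurs already satisfies $k \geq n(m-1)$, which is the claimed inclusion.

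Conversely, I exhibit functions realizing every element of the right-hand side. For a multiple $k = tn$ the function $t f_P$ works directly: its Laplacian has coefficient $-tn$ at $P$, coefficient $+t$ at each $b_j$, and $0$ elsewhere. For $k \geq n(m-1)$ with $n \nmid k$ (so $c \geq m$), I take $x_i = c$ for $i \geq 2$ and choose integers $y_j \geq c - \lfloor c/m\rfloor$ with $\sum_j y_j = k$; such a choice exists because $k \geq n(c-1)+1 > n\bigl(c - \lfloor c/m\rfloor\bigr)$, the last step using $\lfloor c/m\rfloor \geq 1$. One then checks that the three families of inequalities hold, so this $f$ has a unique pole of order $k$ at $P$.

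I expect the main obstacle to be pinning down the exact threshold $n(m-1)$, and in particular recognizing that the per-vertex rounding $y_j \geq \lceil (m-1)c/m\rceil$ must be applied \emph{before} summing over $j$: the weaker bound obtained by summing the constraints $m y_j \geq S_A$ first (namely $mk \geq n(m-1)c$) is not strong enough to separate the achievable non-multiples of $n$ from the forbidden ones. The identity $\lceil (m-1)c/m\rceil = c - \lfloor c/m\rfloor$, together with the case split according to whether $n$ divides $k$, is what converts the rounding into the clean cutoff at $n(m-1)$.
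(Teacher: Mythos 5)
Your proof is correct. It takes the same overall route as the paper --- reduce membership in $H_f(P)$ to solvability of the Laplacian inequalities on $K_{m,n}$ and analyze them directly --- but the analysis itself is carried out quite differently. The paper first invokes Lemma \ref{lem:minimum} to place the minimum of $f$ at $P$, writes the values on each side of the bipartition as a common minimum plus nonnegative deviations ($a+\alpha_i$ and $b+\beta_i$), and splits on $a\ge b$ versus $a<b$: the first case forces all $\alpha_i=0$ (yielding $n\mathbb{N}$), the second forces $a\ge m-1$ (yielding the tail $n(m-1)+\mathbb{N}$). You instead normalize only $f(P)=0$, use per-vertex integrality ($x_i\ge\lceil k/n\rceil=c$ and then $y_j\ge\lceil (m-1)c/m\rceil$), and split on whether $n\mid k$. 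What your version buys: it is self-contained (no appeal to Lemma \ref{lem:minimum}), and it treats the degenerate cases $m=1$ and $n=1$ uniformly, whereas the paper must dispose of them separately since $K_{m,n}-P$ is disconnected there. What it costs is the floor/ceiling bookkeeping; your closing observation that the rounding must be applied to each $y_j$ \emph{before} summing over $j$ is exactly the information that the paper's comparison of the two minima $a$ and $b$ extracts without any rounding. For the reverse inclusion the two arguments are essentially equivalent: the paper exhibits the explicit functions $f_P$ and $mf_P+\sum_{i=1}^t f_{Q_i}$ and uses closure of $H_f(P)$ under addition, which realizes the same solutions of the inequality system that you construct by hand.
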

	\begin{proof}
		If $n$ or $m$ is equal to $1$, then $H_f(P) = \mathbb{N}$, so we assume that $n,m \geq 2$. We label the vertices of $K_{n,m}$ of degree $n$ by $P = P_1,P_2,\dots,P_m$ and the vertices of degree $m$ by $Q = Q_1,\dots,Q_n$. Let $f \in \mathcal{M}(K_{n,m})$ with a unique pole at $P$. By Lemma \ref{lem:minimum} the minimum of $f$ is attained at $P$. Without loss of generality we can assume $f(P) = 0$. Set $f(Q_i) = a + \alpha_i$ for $i \in \{ 1,\dots,n \}$ with $a,\alpha_i \in \mathbb{N}$ and $\alpha_1 = 0$, and $f(P_i) = b+\beta_i$ for $i \in \{ 2,\dots,m \}$ with $b,\beta_i \in \mathbb{N}$ and $\beta_2 = 0$. Now we have
		\begin{align*}
			-\Delta_P f &= na + \sum_{i=2}^n \alpha_i \geq 0, \\
			\Delta_Q f &= a + (m-1)(a-b) - \sum_{i=3}^m \beta_i \geq 0, \\
			\Delta_{P_2} f &= n(b-a) - \sum_{i=2}^n \alpha_i \geq 0.
		\end{align*}
		Now if $a \geq b$, then from the third inequality $0 \geq n(b-a) \geq \sum \alpha_i \geq 0$. Hence $\alpha_i = 0$ for all $i \in \{ 2,\dots,m \}$ and $-\Delta_P f = na \in n\mathbb{N}$. On the other hand, if $a < b$, then from the second inequality
		\[ a+(m-1)(a-b) \geq \sum \beta_i \geq 0 \Rightarrow a \geq (m-1)(b-a) \geq m-1. \]
		This implies $-\Delta_P f = na + \sum \alpha_i \geq n(m-1)$, that is $-\Delta_P f \in n(m-1) + \mathbb{N}$. This proves the inclusion $H_f(P) \subseteq n\mathbb{N} \cup (n(m-1)+\mathbb{N})$.
		
		For the reverse inclusion, it is enough to note that, for the indicator function $f_P$, we have $\Delta f_P = -n P + \sum Q_i$. In addition, for every $t \in \{ 1,\dots,n-1 \}$
		\[ \Delta \left( mf_P + \sum_{i=1}^t f_{Q_i} \right) = -\big( n(m-1)+t \big)P + \sum_{i=2}^m tP_i + \sum_{i=t+1}^n mQ_i. \]
	\end{proof}
	
	Proceeding similarly as in the proof of Corollary \ref{cor:rank complete}, we are able to calculate the rank Weierstrass set of complete bipartite graphs.
	
	\begin{corollary}\label{cor:rank complete bipartite}
		Let $P \in V(K_{m,n})$ be a vertex of degree $n$, we have
		\[ H_r(P) = n \mathbb{N} \cup ((m-1)n + \mathbb{N}) \]
	\end{corollary}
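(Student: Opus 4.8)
The plan is to mimic the proof of Corollary \ref{cor:rank complete}: combine the inclusion coming from Theorem \ref{thm:functional contains rank} with a cardinality count supplied by the Weierstrass gap theorem for graphs (Lemma \ref{lem:basics of Weierstrass}). First I would dispose of the degenerate cases $m=1$ or $n=1$, where Lemma \ref{lem:functional complete bipartite} already gives $H_f(P) = \mathbb{N}$ and the genus is $0$, forcing $H_r(P) = \mathbb{N}$ as well. For $m,n \geq 2$ the graph $K_{m,n}$ is simple, so Theorem \ref{thm:functional contains rank} applies, and together with Lemma \ref{lem:functional complete bipartite} it yields the inclusion $H_r(P) \subseteq H_f(P) = n\mathbb{N} \cup ((m-1)n + \mathbb{N})$.

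Next I would compute the two complements in $\mathbb{N}$ and check that they have the same (finite) size. Since $|V(K_{m,n})| = m+n$ and $|E(K_{m,n})| = mn$, the genus is $g = mn - (m+n) + 1 = (m-1)(n-1)$, so Lemma \ref{lem:basics of Weierstrass}(2) gives $|\mathbb{N} \setminus H_r(P)| = (m-1)(n-1)$. On the other side, the gaps of $H_f(P) = n\mathbb{N} \cup ((m-1)n + \mathbb{N})$ are precisely the non-multiples of $n$ lying in $\{0,1,\dots,(m-1)n-1\}$: in each of the $m-1$ blocks $\{kn,\dots,(k+1)n-1\}$ with $0 \leq k \leq m-2$ there are exactly $n-1$ such integers, so $|\mathbb{N} \setminus H_f(P)| = (m-1)(n-1)$ as well.

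Finally, since $H_r(P) \subseteq H_f(P)$ and both complements in $\mathbb{N}$ are finite of the same cardinality $(m-1)(n-1)$, the inclusion is forced to be an equality, which is exactly the claim. The argument is essentially formal once the two counts are in hand; the only genuinely computational step is matching the genus $(m-1)(n-1)$ of $K_{m,n}$ with the number of gaps of the functional Weierstrass set, so I expect that bookkeeping (and keeping track of the trivial $m=1$ or $n=1$ cases) to be the sole mild obstacle.
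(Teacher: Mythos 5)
Your proposal is correct and follows exactly the route the paper intends (mirroring the proof of Corollary \ref{cor:rank complete}): the inclusion $H_r(P) \subseteq H_f(P)$ from Theorem \ref{thm:functional contains rank} together with Lemma \ref{lem:functional complete bipartite}, and then the gap count $|\mathbb{N} \setminus H_r(P)| = g(K_{m,n}) = (m-1)(n-1) = |\mathbb{N} \setminus (n\mathbb{N} \cup ((m-1)n+\mathbb{N}))|$ from Lemma \ref{lem:basics of Weierstrass}. Your bookkeeping of the genus and of the gaps of $n\mathbb{N} \cup ((m-1)n+\mathbb{N})$ is accurate, and the treatment of the cases $m=1$ or $n=1$ is a harmless extra precaution.
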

	\begin{comment}
	\begin{proof}
		The result follows from Lemma \ref{lem:functional complete bipartite}, Theorem \ref{thm:functional contains rank} and the equalities $|\mathbb{N} \setminus H_r(P)| = g(K_{n,m}) = \big|\mathbb{N} \setminus \big( n \mathbb{N} \cup ((m-1)n + \mathbb{N}) \big) \big|$, the first given by Lemma \ref{lem:basics of Weierstrass}.
	\end{proof}
	\end{comment}
	
	\begin{remark}
		The computation of the rank Weierstrass set of complete graphs (Corollary \ref{cor:rank complete}) was already implicit in the proof of \cite[Theorem 8]{cools2017gonality}, a result that gives an upper bound for the gonality sequence of complete graphs. In fact, we note that the rank Weierstrass set of a complete graph coincides with its gonality sequence.
	\end{remark}

	\begin{question}
		Under which conditions on the graph $G$ and the vertex $P$ do we have $H_f(P) = H_r(P)$?
	\end{question}
	
	\section{Functional Weierstrass sets}\label{section 4}
	
	In this section, we characterize the subsets of $\mathbb{N}$ that arise as the functional Weierstrass sets of some graph or simple graph.
	
	\begin{comment}
	
	\begin{lemma}
	Suppose that $f \in \mathcal{M}(G)$ has only one pole at $P \in V(G)$. Then $f$ is constant on the connected components of the graph $G'$ obtained from $G$ by deleting $P$ and its incidence edges.
	\end{lemma}
	\begin{proof}
	Let $C$ be a connected component of $G'$ and let $v \in V(C)$ such that
	\[ f(v) = \min\{f(w) : w \in V(C)\}. \]
	
	\end{proof}
	
	\end{comment}
	
	\begin{lemma}\label{lem:functional Weierstrass set structure}
		Let $G$ be a graph and fix a vertex $P \in V(G)$. The functional Weierstrass set $H_f(P)$ is an additive submonoid of $\mathbb{N}$. Further, if $G$ is simple, then $H_f(P)$ is a numerical semigroup.
	\end{lemma}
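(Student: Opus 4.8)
The plan is to handle the two assertions separately: the submonoid structure holds for \emph{every} graph and is a formal consequence of the linearity of the Laplacian, whereas cofiniteness in the simple case can be imported wholesale from the rank Weierstrass set.

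First I would unwind the definition: a function $f \in \mathcal{M}(G)$ has a unique pole of order $n$ at $P$ exactly when $\Delta_P f = -n$ and $\Delta_Q f \geq 0$ for every $Q \in V(G)\setminus\{P\}$; equivalently, $\Delta f = -nP + E$ with $E$ effective and supported away from $P$. In this language $0 \in H_f(P)$ is witnessed by any constant function, whose Laplacian vanishes identically, so $H_f(P)$ contains the monoid identity.

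For closure under addition, suppose $n,m \in H_f(P)$ with witnesses $f$ and $h$. Since $\Delta$ is linear (it is multiplication by the Laplacian matrix), I would test $f+h$: by linearity
\[ \Delta_P(f+h) = -(n+m) \quad\text{and}\quad \Delta_Q(f+h)\geq 0 \text{ for all } Q\neq P, \]
the first equality being exact and the inequalities following by adding two nonnegative quantities. Hence $f+h$ again has a unique pole, of order exactly $n+m$, so $n+m \in H_f(P)$ and $H_f(P)$ is an additive submonoid of $\mathbb{N}$; this step is insensitive to whether $G$ is simple.

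For the final claim I would use that, by the definition recalled in the introduction, a numerical semigroup is precisely a cofinite additive submonoid of $\mathbb{N}$, so it suffices to prove $\mathbb{N}\setminus H_f(P)$ is finite when $G$ is simple. Here I would feed in Theorem \ref{thm:functional contains rank}, giving $H_r(P)\subseteq H_f(P)$, together with Lemma \ref{lem:basics of Weierstrass}(2), giving $|\mathbb{N}\setminus H_r(P)| = g$; then $\mathbb{N}\setminus H_f(P) \subseteq \mathbb{N}\setminus H_r(P)$ is finite and $H_f(P)$ is cofinite. I expect no serious obstacle: the only delicate point is the bookkeeping in the additivity step — checking simultaneously that the pole of $f+h$ stays unique (from the inequalities at $Q\neq P$) and that its order is exactly $n+m$ rather than smaller (from the exact equality at $P$) — while the genuine content, cofiniteness, is deferred entirely to the already-established Theorem \ref{thm:functional contains rank} and the gap theorem for $H_r(P)$.
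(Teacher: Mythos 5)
Your proposal is correct and follows essentially the same route as the paper: the identity and closure under addition come from linearity of the Laplacian, and cofiniteness in the simple case is imported from Theorem \ref{thm:functional contains rank} together with Lemma \ref{lem:basics of Weierstrass}(2). Your version merely spells out the pointwise verification that $f+h$ has a unique pole of order exactly $n+m$, which the paper leaves implicit.
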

	\begin{proof}
		We always have $0 \in H_f(P)$. Further, for every $f,g \in \mathcal{M}(G)$, we have $\Delta(f+g) = \Delta f + \Delta g$, so $H_f(P)$ is closed under addition. Moreover, if $G$ is simple, from Theorem \ref{thm:functional contains rank} we have $H_r(P) \subseteq H_f(P)$, and from Lemma \ref{lem:basics of Weierstrass} it follows $|\mathbb{N} \setminus H_f(P)| \leq | \mathbb{N} \setminus H_r(P)| = g(G)$. Therefore $H_f(P)$ is a numerical semigroup.
	\end{proof}
	
	When the graph $G$ is not clear from the context, we denote the functional Weierstrass set by $H_f^G(P)$. For two subsets $A, B \subseteq \mathbb{N}$, we define
	\[ A + B = \{ a+b : a \in A, b \in B \}. \]
	
	\begin{proposition}\label{prop:sum of graphs}
		Let $G_1$ and $G_2$ be two graphs, and let $G$ be the graph obtained from $G_1$ and $G_2$ by the vertex gluing of $P_1 \in V(G_1)$ and $P_2 \in V(G_2)$, and denote by $P \in V(G)$ the identified vertex in $G$. Then
		\[ H^G_f(P) = H^{G_1}_f(P_1) + H^{G_2}_f(P_2). \]
	\end{proposition}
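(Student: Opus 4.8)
The plan is to reduce everything to the block structure of the Laplacian of a vertex-glued graph. First I would unwind the definition: $n \in H_f(P)$ precisely when there is $f \in \mathcal{M}(G)$ with $\Delta_P f = -n$ and $\Delta_Q f \geq 0$ for every $Q \neq P$, i.e. $f$ has no pole off $P$ and its pole order at $P$ equals $n$. I identify $V(G) = (V(G_1) \setminus \{P_1\}) \sqcup (V(G_2) \setminus \{P_2\}) \sqcup \{P\}$ and record the decisive feature of the gluing: no edge of $G$ joins a vertex of $V(G_1) \setminus \{P_1\}$ to a vertex of $V(G_2) \setminus \{P_2\}$, and the neighbourhood of $P$ in $G$ is the disjoint union of the neighbourhoods of $P_1$ in $G_1$ and of $P_2$ in $G_2$. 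Consequently any $f \in \mathcal{M}(G)$ restricts to $f_1 \in \mathcal{M}(G_1)$ and $f_2 \in \mathcal{M}(G_2)$ (setting $f_i(P_i) = f(P)$), and the Laplacians obey $\Delta^G_Q f = \Delta^{G_1}_Q f_1$ for $Q \in V(G_1) \setminus \{P_1\}$, $\Delta^G_Q f = \Delta^{G_2}_Q f_2$ for $Q \in V(G_2) \setminus \{P_2\}$, and the key additivity relation $\Delta^G_P f = \Delta^{G_1}_{P_1} f_1 + \Delta^{G_2}_{P_2} f_2$.

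For the inclusion $H^G_f(P) \subseteq H^{G_1}_f(P_1) + H^{G_2}_f(P_2)$, I would take $n \in H^G_f(P)$ witnessed by such an $f$, form $f_1, f_2$, and set $n_i = -\Delta^{G_i}_{P_i} f_i$. The restriction relations give $\Delta^{G_i}_Q f_i = \Delta^G_Q f \geq 0$ for all $Q \neq P_i$, so $f_i$ has no pole off $P_i$; since every principal divisor has degree $0$, we get $\Delta^{G_i}_{P_i} f_i = -\sum_{Q \neq P_i} \Delta^{G_i}_Q f_i \leq 0$, whence $n_i \geq 0$ and $n_i \in H^{G_i}_f(P_i)$ (the value $n_i = 0$ being realized by a constant function). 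The additivity relation then yields $n = n_1 + n_2$.

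For the reverse inclusion I would glue the witnesses: start from $n_i \in H^{G_i}_f(P_i)$ realized by $f_i$. The only genuine subtlety is that $f_1$ and $f_2$ need not agree at the identified vertex; since constants lie in the kernel of each Laplacian, I replace $f_2$ by $f_2 + c$ with $c = f_1(P_1) - f_2(P_2)$, which leaves $\Delta^{G_2} f_2$ unchanged and arranges $f_1(P_1) = f_2(P_2)$. The common value then defines $f \in \mathcal{M}(G)$ by $f(Q) = f_1(Q)$ for $Q \in V(G_1) \setminus \{P_1\}$, $f(Q) = f_2(Q)$ for $Q \in V(G_2) \setminus \{P_2\}$, and $f(P) = f_1(P_1)$. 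The restriction and additivity relations give $\Delta^G_Q f \geq 0$ for $Q \neq P$ and $\Delta^G_P f = -n_1 - n_2$, so $n_1 + n_2 \in H^G_f(P)$.

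I expect no deep obstacle: the entire argument rests on recognizing that vertex gluing turns the Laplacian into a form that is \emph{diagonal away from $P$} and \emph{additive at $P$}. The only two points requiring a little care are the degree-zero computation that forces each restricted pole order $n_i$ to be nonnegative, and the harmless constant shift needed to make the two witness functions compatible at the glued vertex.
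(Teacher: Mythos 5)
Your proof is correct and follows essentially the same route as the paper: for one inclusion you restrict a witness to each $G_i$ and use that principal divisors have degree zero to force the restricted pole orders to be nonnegative, and for the other you extend/glue witnesses across the identified vertex. The only cosmetic difference is that the paper extends each $f_i$ by a constant to all of $G$ separately and then invokes closure of $H_f^G(P)$ under addition (Lemma \ref{lem:functional Weierstrass set structure}), whereas you paste the two witnesses directly after a constant shift --- the underlying computation is identical.
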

	\begin{proof}
		We will consider $G_1$ and $G_2$ as subgraphs of $G$. For simplicity, set $S = H_f^G(P)$ and $S_i = H^{G_i}_f(P_i)$ for $i \in \{1,2\}$. Let $x \in S_1$, then there exists $f \in \mathcal{M}(G_1)$ such that $\Delta(f) = D - xP_1$ for some effective divisor $D \geq 0$. Consider the extension $f'$ of $f$ to $G$ by setting $f'(v) = f(P)$ for all $v \in V(G_2) \setminus \{P\}$. Then $\Delta(f') = \Delta(f)$ and $x \in S$. This proves $S_1 \subseteq S$. Similarly we obtain $S_2 \subseteq S$, thus $S_1 + S_2 \subseteq S$ since $S$ is closed under addition.
		
		On the other hand, let $x \in S$. Then there exists $f \in \mathcal{M}(G)$ such that $\Delta(f) = D-xP$ for some $D \geq 0$. Substituting $f$ with $f+a$ for some constant $a \in \mathbb{Z}$ if necessary, we can assume that $f(P) = 0$. For $i \in \{1,2\}$, define
		\[ f_i \in \mathcal{M}(G_i) \quad f_i(v) = f(v) \quad \forall v \in V(G_i) \subseteq V(G). \]
		Since $f(P) = 0$, we have $\Delta_v(f_i) = \Delta_v(f) \geq 0$ for all $v \in V(G_i) \setminus \{P\} \subseteq V(G)$, with $i \in \{1,2\}$. Since every principal divisor has degree zero, we have
		\[ \Delta(f_1) = D_1 - x_1 P, \quad \Delta(f_2) = D_2 - x_2 P \]
		for some $x_1,x_2 \in \mathbb{N}$ and some effective divisor $D_i \geq 0$ on $G_i$ for $i \in \{1,2\}$. From the definition we have $f = f_1+f_2$, therefore $\Delta(f) = \Delta(f_1) + \Delta(f_2)$, hence $x = x_1 + x_2 \in S_1 + S_2$.
	\end{proof}

	\begin{corollary}\label{cor:additive submonoid}
		For every additive submonoid $M$ of $\mathbb{N}$ there exists a graph $G$ such that $M = H_f(P)$ for some $P \in V(G)$.
	\end{corollary}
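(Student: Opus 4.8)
The plan is to realize $M$ by gluing together banana graphs, converting a monoid decomposition of $M$ into a vertex gluing via Proposition \ref{prop:sum of graphs}. The crucial structural input is that every additive submonoid of $\mathbb{N}$ is finitely generated, and that each ``principal'' piece $a\mathbb{N}$ is already realized by a banana graph.

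First I would dispose of the trivial case $M = \{0\}$, which is realized by the single-vertex graph as noted in Remark \ref{rmk:banana graph}. Assume now $M \neq \{0\}$. I would recall that $M$ is finitely generated: setting $d = \gcd(M \setminus \{0\})$, every element of $M$ is a multiple of $d$, and $S = \{ m/d : m \in M \}$ is a submonoid of $\mathbb{N}$ with $\gcd(S \setminus \{0\}) = 1$, hence cofinite, hence a numerical semigroup, and numerical semigroups are finitely generated (their complement in $\mathbb{N}$ being finite, see \cite{rosales2009numerical}). Writing $S = \langle b_1,\dots,b_k \rangle$ gives $M = \langle d b_1, \dots, d b_k \rangle$ with each generator $a_i := d b_i \geq 1$. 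The point of producing generators is the elementary monoid identity $\langle a_1,\dots,a_k \rangle = a_1 \mathbb{N} + \cdots + a_k \mathbb{N}$, which rewrites $M$ as an iterated sumset of the sets $a_i \mathbb{N}$.

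Next I would invoke Remark \ref{rmk:banana graph}: the banana graph $B_{a_i}$ (two vertices joined by $a_i$ edges) satisfies $H_f^{B_{a_i}}(P_i) = a_i \mathbb{N}$ at each of its two vertices. I construct $G$ by performing the vertex gluing of the distinguished vertices of $B_{a_1}, \dots, B_{a_k}$ into a single common vertex $P$, and I apply Proposition \ref{prop:sum of graphs} repeatedly, gluing one banana graph at a time. The sum formula then yields $H_f^G(P) = a_1 \mathbb{N} + \cdots + a_k \mathbb{N} = M$, as required.

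The only genuine obstacle is the structural fact that submonoids of $\mathbb{N}$ are finitely generated; once generators are in hand, the rest is a routine application of the vertex-gluing sum formula together with the banana-graph computation. I would still verify the degenerate cases to confirm the induction applies cleanly: $k = 1$ (so $G = B_{a_1}$), and the possibility $a_i = 1$, where $B_1$ is a single edge with $H_f = \mathbb{N}$. Note also that the resulting $G$ is in general a multigraph rather than a simple graph, which is consistent with the statement, since this corollary concerns arbitrary graphs.
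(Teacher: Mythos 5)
Your proposal is correct and is essentially the paper's own argument: the paper likewise writes $M = \langle n_1,\dots,n_e\rangle$ (citing finite generation of submonoids of $\mathbb{N}$) and builds the graph with a central vertex $P$ joined by $n_i$ parallel edges to a vertex $P_i$, which is exactly your vertex gluing of the banana graphs $B_{n_i}$, concluding via Remark \ref{rmk:banana graph} and Proposition \ref{prop:sum of graphs}. Your extra details (the gcd argument for finite generation and the degenerate cases $M=\{0\}$, $k=1$, $a_i=1$) are harmless elaborations of steps the paper handles by citation or leaves implicit.
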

	\begin{proof}
		From \cite[Lemma 2.3]{rosales2009numerical}, every additive submonoid of $\mathbb{N}$ is finitely generated, so suppose that $M = \langle n_1,\dots,n_e \rangle$. Now let $G$ be the graph with vertices $V(G) = \{ P, P_1, \dots, P_e \}$ where the vertex $P$ has $n_i$ edges connected to the vertex $P_i$ for every $i \in \{ 1,\dots,e \}$. From Remark \ref{rmk:banana graph} and Proposition \ref{prop:sum of graphs} it follows that $H_f(P) = n_1 \mathbb{N} + \dots + n_e \mathbb{N} = M$.
	\end{proof}
	
	\begin{corollary}\label{cor:numerical semigroups}
		For every numerical semigroup $S$ there exists a simple graph $G$ such that $S = H_f(P)$ for some $P \in V(G)$.
	\end{corollary}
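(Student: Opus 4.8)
The plan is to mimic the proof of Corollary \ref{cor:additive submonoid}, expressing $S$ as a sumset of functional Weierstrass sets of simple graphs and then gluing the corresponding graphs along Proposition \ref{prop:sum of graphs}. The essential new difficulty is that no simple graph can have $n\mathbb{N}$ as its functional Weierstrass set when $n \geq 2$: by Lemma \ref{lem:functional Weierstrass set structure} a simple graph always produces a numerical semigroup, hence a cofinite set, whereas $n\mathbb{N}$ is not cofinite. So the banana graphs $B_{n_i}$ used in Corollary \ref{cor:additive submonoid} must be replaced by simple graphs whose Weierstrass set agrees with $n_i\mathbb{N}$ only on a long initial segment, and the discrepancy must be absorbed past the Frobenius number of $S$. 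This trade-off is the crux of the argument.

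First I would dispose of the trivial case $S = \mathbb{N}$, which is realized by $K_2 = K_{1,1}$, where Lemma \ref{lem:functional complete bipartite} gives $H_f(P) = \mathbb{N}$. Otherwise, write $S = \langle n_1, \dots, n_e \rangle$ with all $n_i \geq 2$, and let $F$ denote the Frobenius number of $S$ (well defined since $\mathbb{N} \setminus S$ is finite), so that every integer strictly larger than $F$ lies in $S$. For each $i$ I would take the complete bipartite graph $G_i = K_{m_i, n_i}$ with $m_i$ chosen large enough that $n_i(m_i - 1) > F$, and let $P_i \in V(G_i)$ be a vertex of degree $n_i$. By Lemma \ref{lem:functional complete bipartite},
\[ S_i := H_f^{G_i}(P_i) = n_i \mathbb{N} \cup (n_i(m_i-1) + \mathbb{N}), \]
and each $K_{m_i,n_i}$ is a simple graph.

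Next I would verify that $S_1 + \dots + S_e = S$. For the inclusion $S \subseteq S_1 + \cdots + S_e$, note that $n_i\mathbb{N} \subseteq S_i$ for every $i$, so the sumset contains $n_1\mathbb{N} + \cdots + n_e\mathbb{N} = \langle n_1, \dots, n_e \rangle = S$, exactly as in Corollary \ref{cor:additive submonoid}. For the reverse inclusion, take $x = x_1 + \cdots + x_e$ with $x_i \in S_i$. If every $x_i \in n_i\mathbb{N}$, then $x \in \langle n_1, \dots, n_e \rangle = S$. Otherwise some $x_j$ lies in $n_j(m_j - 1) + \mathbb{N}$, whence $x \geq x_j \geq n_j(m_j-1) > F$, and therefore $x \in S$. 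The choice of the $m_i$, pushing the cofinite tails of the $S_i$ beyond $F$, is precisely what forces the sumset to be exactly $S$ rather than something larger; this is the step I expect to be the main obstacle, and the whole point of passing through $K_{m_i,n_i}$ instead of a single banana graph.

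Finally, let $G$ be obtained by iteratively applying the vertex gluing of Proposition \ref{prop:sum of graphs}, identifying all the $P_i$ into a single vertex $P$. Vertex gluing of simple graphs is again simple: the neighbours of the identified vertices come from disjoint vertex sets, so no multiple edge is created at $P$, and the result has more than one vertex. Applying Proposition \ref{prop:sum of graphs} $e-1$ times yields
\[ H_f^G(P) = S_1 + \cdots + S_e = S, \]
which completes the proof.
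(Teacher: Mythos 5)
Your proposal is correct and follows essentially the same route as the paper: realize each generator $n_i$ by a complete bipartite graph $K_{m,n_i}$ with $m$ large enough that the tail $n_i(m-1)+\mathbb{N}$ lies past the Frobenius number, then glue along the degree-$n_i$ vertices and apply Proposition \ref{prop:sum of graphs}. The paper simply fixes a single $m = \max(\mathbb{N}\setminus S)+2$ for all factors and leaves the sumset verification (which you spell out, correctly) implicit.
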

	\begin{proof}
		Suppose that $S = \langle n_1,\dots,n_e \rangle$. Set $m = \max(\mathbb{N} \setminus S) +2$ and consider the complete bipartite graphs $K_{m,n_1},\dots,K_{m,n_e}$. Fix a vertex of degree $n_i$ in each graph and construct the graph $G$ by identifying these vertices, recursively applying the vertex gluing. Denote with $P$ the identified vertex in $G$. From Proposition \ref{prop:sum of graphs} and Lemma \ref{lem:functional complete bipartite} we obtain $H_f(P) = S$.
	\end{proof}
	
	Using Lemma \ref{lem:functional Weierstrass set structure} and Corollary \ref{cor:additive submonoid} and \ref{cor:numerical semigroups} we now state the main result of this section.
	
	\begin{theorem}\label{thm:functional characterization}
		The functional Weierstrass sets of graphs (resp. simple graphs) are precisely the additive submonoids of $\mathbb{N}$ (resp. numerical semigroups).
	\end{theorem}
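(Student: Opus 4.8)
The plan is to observe that this theorem is a direct synthesis of the three preceding results, so the proof reduces to assembling the two inclusions in each of the two cases. I would structure it as a pair of ``if and only if'' arguments, one for arbitrary graphs and one for simple graphs, and in each case treat necessity and sufficiency separately.

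First I would dispatch the necessity direction, namely that every functional Weierstrass set is of the claimed type. This is exactly the content of Lemma \ref{lem:functional Weierstrass set structure}, which shows that $H_f(P)$ is always an additive submonoid of $\mathbb{N}$ and is moreover a numerical semigroup whenever $G$ is simple. So nothing new is required here beyond invoking that lemma. For the sufficiency direction I would appeal to the two realization corollaries: for the graph case, Corollary \ref{cor:additive submonoid} produces, for any finitely generated submonoid $M = \langle n_1,\dots,n_e\rangle$, a graph obtained by vertex-gluing the banana graphs $B_{n_i}$ at a common vertex $P$, yielding $H_f(P) = M$; for the simple-graph case, Corollary \ref{cor:numerical semigroups} performs the analogous construction with the complete bipartite graphs $K_{m,n_i}$ replacing the (non-simple) banana blocks. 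Combining the appropriate pair in each case yields the equality of the two classes.

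The only genuine subtlety, and the point I would flag as the crux, is the realization step in the simple-graph case. Each bipartite block contributes $n_i \mathbb{N} \cup (n_i(m-1)+\mathbb{N})$ rather than the clean $n_i\mathbb{N}$ of a banana graph, so one must check that the spurious tail does not enlarge the semigroup after gluing. This is precisely where the choice $m = \max(\mathbb{N}\setminus S)+2$ in Corollary \ref{cor:numerical semigroups} enters: it forces every contributed element $n_i(m-1)$ to lie above the Frobenius number of $S$, so that the vertex-gluing sum (computed via Proposition \ref{prop:sum of graphs}) collapses back to $\langle n_1,\dots,n_e\rangle = S$. Granting that verification, the theorem follows immediately, and the write-up is essentially a one-line citation of Lemma \ref{lem:functional Weierstrass set structure} together with Corollaries \ref{cor:additive submonoid} and \ref{cor:numerical semigroups}.
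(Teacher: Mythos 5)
Your proposal is correct and follows exactly the paper's route: the paper states Theorem \ref{thm:functional characterization} as an immediate consequence of Lemma \ref{lem:functional Weierstrass set structure} (necessity) together with Corollaries \ref{cor:additive submonoid} and \ref{cor:numerical semigroups} (sufficiency), with no further argument. Your added remark about why the choice $m = \max(\mathbb{N}\setminus S)+2$ keeps the tails $n_i(m-1)+\mathbb{N}$ above the Frobenius number, so the glued sum collapses to $S$, is a correct reading of the content of Corollary \ref{cor:numerical semigroups}.
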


	We close the section by calculating the multiplicity of the functional Weierstrass set of a simple graph. Recall that the multiplicity of a numerical semigroup $S$ is the integer $m(S) = \min(S \setminus \{0\})$. Let $G$ be a simple graph and fix a vertex $P \in V(G)$. Denote with $G-P$ the graph obtained from $G$ by removing the vertex $P$ and its adjacent edges.
	
	\begin{lemma}\cite[Theorem 2.10]{kang2019laplacian}\label{lem:multiplicity connected}
		Suppose that $G-P$ is connected. Then $m(H_f(P)) = \deg(P)$.
	\end{lemma}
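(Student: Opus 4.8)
The plan is to establish the two inequalities $m(H_f(P)) \le \deg(P)$ and $m(H_f(P)) \ge \deg(P)$ separately, using throughout the explicit description that $n \in H_f(P)$ precisely when there is some $f \in \mathcal{M}(G)$ with $\Delta_P f = -n$ and $\Delta_Q f \ge 0$ for every $Q \ne P$, so that $-\Delta_P f$ is the pole order.

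For the upper bound I would simply exhibit $\deg(P)$ as an element of $H_f(P)$ via the indicator function $f_P$. A direct computation of the Laplacian, using that $G$ is simple (so $P$ is joined to each neighbour by a single edge), gives
\[ \Delta f_P = -\deg(P)\,P + \sum_{Q \in \mathcal{N}(P)} Q, \]
which is effective away from $P$ and has coefficient $-\deg(P)$ at $P$. Hence $f_P$ has a unique pole of order $\deg(P)$ at $P$, so $\deg(P) \in H_f(P)$ and therefore $m(H_f(P)) \le \deg(P)$.

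For the lower bound — the step where the hypothesis that $G-P$ is connected is indispensable — I would take an arbitrary $f \in \mathcal{M}(G)$ with a unique pole of positive order $n = -\Delta_P f$ at $P$ and invoke Lemma \ref{lem:minimum}: connectivity of $G-P$ forces $f(P) < f(Q)$ for every $Q \in V(G)$. Since $f$ is integer-valued, the strict inequality upgrades to $f(Q) \ge f(P) + 1$ for each neighbour $Q$ of $P$, whence
\[ n = -\Delta_P f = \sum_{Q \in \mathcal{N}(P)} \big(f(Q) - f(P)\big) \ge \sum_{Q \in \mathcal{N}(P)} 1 = \deg(P). \]
Thus every nonzero element of $H_f(P)$ is at least $\deg(P)$, giving $m(H_f(P)) \ge \deg(P)$.

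Combining the two bounds yields $m(H_f(P)) = \deg(P)$. The only genuinely delicate point is the lower bound, and there all the work is carried by Lemma \ref{lem:minimum}; once the strict minimum at $P$ is secured, the integer-valuedness of $f$ makes the contribution of each of the $\deg(P)$ edges at $P$ at least one, so I expect no obstacle beyond correctly applying that lemma.
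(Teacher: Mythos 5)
Your proof is correct, and it is essentially the intended argument: the paper itself gives no proof of this lemma (it is cited from \cite[Theorem 2.10]{kang2019laplacian}), but the preceding Lemma \ref{lem:minimum} is precisely the ingredient extracted from that reference for the lower bound, and the upper bound via $\Delta f_P = -\deg(P)P + \sum_{Q \in \mathcal{N}(P)} Q$ is the same computation the paper uses elsewhere (e.g.\ in the proof of Lemma \ref{lem:functional complete bipartite}). Both directions check out, including the correct use of simplicity so that $|\mathcal{N}(P)| = \deg(P)$ and each neighbour contributes at least $1$ to $-\Delta_P f$.
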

	
	\begin{proposition}
		Let $G_1,\dots,G_m$ be the connected components of $G-P$, and let $\deg_{G_i}P$ be the number of edges incident with $P$ in $G_i$. Then
		\[ m(H_f(P)) = \min \{ \deg_{G_i}P : i \in \{ 1,\dots,m \} \} \]
	\end{proposition}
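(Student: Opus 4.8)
The plan is to realize $G$ as an iterated vertex gluing of its ``one-component-plus-$P$'' pieces and then to combine Lemma \ref{lem:multiplicity connected} with Proposition \ref{prop:sum of graphs}. For each $i \in \{1,\dots,m\}$, let $G_i^+$ denote the subgraph of $G$ induced by $V(G_i) \cup \{P\}$, that is, the component $G_i$ together with $P$ and all edges of $G$ joining $P$ to $G_i$. Since $G$ is connected and $G_i$ is a connected component of $G-P$, the vertex $P$ must be adjacent to at least one vertex of $G_i$, so $\deg_{G_i} P \geq 1$ and each $G_i^+$ is a connected simple graph with more than one vertex. Moreover, every edge of $G$ either lies inside some component $G_i$ or is incident with $P$ and hence lies in the corresponding $G_i^+$, so $G$ is recovered by recursively gluing $G_1^+,\dots,G_m^+$ along the common vertex $P$.

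First I would apply Lemma \ref{lem:multiplicity connected} to each piece. By construction $G_i^+ - P = G_i$ is connected, so the lemma gives $m(H_f^{G_i^+}(P)) = \deg_{G_i^+}(P) = \deg_{G_i} P$. Next, iterating Proposition \ref{prop:sum of graphs} over the gluing yields the decomposition
\[ H_f^G(P) = H_f^{G_1^+}(P) + \dots + H_f^{G_m^+}(P). \]

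It then remains to compute the multiplicity of a sum of additive submonoids of $\mathbb{N}$, and I would record the elementary fact that if $S_1,\dots,S_m$ are additive submonoids of $\mathbb{N}$ with $S_i \neq \{0\}$, then $m(S_1 + \dots + S_m) = \min\{ m(S_i) : i \}$. Indeed, since each $S_i$ contains $0$, the sum contains every $S_i$ and hence all the values $m(S_i)$; conversely any nonzero element of the sum has the form $s_1 + \dots + s_m$ with some $s_i \neq 0$, and as all summands are nonnegative it is at least $s_i \geq m(S_i) \geq \min_j m(S_j)$. Applying this to the displayed decomposition gives
\[ m(H_f^G(P)) = \min\{ m(H_f^{G_i^+}(P)) : i \} = \min\{ \deg_{G_i} P : i \in \{1,\dots,m\} \}, \]
which is the claim.

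Every step here is routine once the pieces are in place; there is no genuine obstacle, only the bookkeeping of verifying that each $G_i^+$ is a legitimate simple graph satisfying the hypotheses of Lemma \ref{lem:multiplicity connected}. The two points to check are that $\deg_{G_i} P \geq 1$, so that $G_i^+$ is connected, and that the iterated gluing of the $G_i^+$ faithfully reconstructs $G$ with no edge omitted or double-counted; both follow immediately from $G_i$ being a connected component of $G-P$ and from the simplicity of $G$.
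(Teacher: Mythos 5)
Your proof is correct and follows essentially the same route as the paper: the paper defines the same pieces (called $C_i$ there, your $G_i^+$), realizes $G$ as their vertex gluing along $P$, and invokes Proposition \ref{prop:sum of graphs} together with Lemma \ref{lem:multiplicity connected}. You simply spell out the elementary fact about the multiplicity of a sum of submonoids, which the paper leaves implicit.
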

	\begin{proof}
		Let $C_i$ be the graph obtained from $G_i$ by adding the vertex $P$ and the edges of $G$ incident with $P$ in $G_i$. The graph $G$ can be seen as the vertex gluing of the graphs $C_i$ along $P$. Now it is enough to apply Proposition \ref{prop:sum of graphs} and Lemma \ref{lem:multiplicity connected}.
	\end{proof}

	\section{Rank Weierstrass sets}\label{section 5}
	
	Let $G$ be a graph and fix a vertex $P \in V(G)$. Define the function $\lambda_P: \mathbb{N} \rightarrow \mathbb{N}$ by
	\[ \lambda_P(k) = \min \{ n \in \mathbb{N} : r(nP) = k \}. \]
	Note that the function $\lambda_P$ is an order preserving bijection between $\mathbb{N}$ and $H_r(P)$. Thus, $\lambda_P$ completely determines $H_r(P)$ and vice versa. We will write $\lambda^G_P$ when the graph $G$ is not clear from the context
	
	\begin{proposition}\label{prop:lambda sum}
		Let $G_1$ and $G_2$ be two graphs and fix $P_i \in V(G_i)$ for $i\in \{1,2\}$. Let $G$ be the vertex gluing of $P_1$ and $P_2$, and let $P$ be the identified vertex. Then
		\[ \lambda^G_P(k) = \max \left\{ \lambda_{P_1}^{G_1}(k_1) + \lambda_{P_2}^{G_2}(k_2) : k_1 + k_2 = k \right\}. \]
	\end{proposition}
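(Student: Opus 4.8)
The plan is to reduce the statement to a single ``gluing lemma'' describing when a divisor of the form $nP$ dominates a prescribed effective divisor after the identification, and then to invert the resulting rank formula. Since $\lambda_P$ is the order-preserving inverse of $k \mapsto r(\,\cdot\,P)$ and, by Lemma \ref{lem:rank minus one}, the rank of $nP$ increases by exactly one at each element of $H_r(P)$, we have $\lambda_P^G(k) = \min\{n : r_G(nP) \ge k\}$, and $r_G(nP) \ge k$ is equivalent to $nP - E \sim$ effective for every effective $E$ of degree $k$. So it suffices to understand, for an effective divisor $E$ on $G$, when $nP - E$ is equivalent to an effective divisor. Writing $E_1 = E|_{V(G_1)}$ (carrying the whole coefficient $E(P)$ to $P_1$) and $E_2 = E|_{V(G_2)\setminus\{P\}}$, the key claim is
\[ nP - E \sim_G \text{effective} \iff \exists\, n_1,n_2 \ge 0,\ n_1+n_2 = n,\ n_iP - E_i \sim_{G_i} \text{effective for } i\in\{1,2\}. \]

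The backward direction is the easy one and mimics the first half of the proof of Proposition \ref{prop:sum of graphs}: given functions $f_i$ on $G_i$ witnessing effectivity, normalize $f_i(P_i)=0$ and glue them into a single $f\in\mathcal{M}(G)$; because no edge joins $V(G_1)\setminus\{P\}$ to $V(G_2)\setminus\{P\}$, the Laplacian splits off $P$ and adds at $P$, so $nP - E + \Delta f$ is effective. The forward direction carries the real work. Starting from $nP - E + \Delta f = F \ge 0$ with $f(P)=0$, I restrict $f$ to $f_i\in\mathcal{M}(G_i)$ and observe that $F_i := n_iP_i - E_i + \Delta_{G_i} f_i$ agrees with $F$ on $V(G_i)\setminus\{P\}$, hence is effective there; the only thing to arrange is the coefficient at $P_i$. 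A degree count shows that $F_i(P_i)\ge 0$ holds precisely when $n_i \ge L_i$, where $L_i = \deg E_i + \sum_{v\in V(G_i)\setminus\{P\}} F(v) \ge 0$, and crucially $L_1 + L_2 = n - F(P) \le n$. Since both $L_i$ are non-negative and sum to at most $n$, there is an integer $n_1\in[L_1, n-L_2]$; setting $n_2 = n-n_1$ yields a legal non-negative split with $F_i(P_i)\ge 0$ for both $i$. I expect this simultaneous splitting of the chips at $P$ to be the main obstacle, and the inequality $L_1+L_2\le n$ is exactly what makes it go through.

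With the gluing lemma in hand the two inequalities follow formally. For ``$\le$'', set $N$ equal to the right-hand maximum; given any effective $E$ of degree $k$ with $\deg E_i = d_i$, choosing $n_1 = \lambda_{P_1}(d_1)$ and $n_2 = N - n_1 \ge \lambda_{P_2}(d_2)$ makes each $n_iP - E_i$ effective by definition of $\lambda_{P_i}$, so $NP - E \sim$ effective; as $E$ was arbitrary, $r_G(NP)\ge k$ and $\lambda_P^G(k)\le N$. For ``$\ge$'', fix a split $k_1+k_2=k$ and put $m_i = \lambda_{P_i}(k_i)$; since $r_{G_i}((m_i-1)P) = k_i - 1$ there is an effective $E_i^*$ of degree $k_i$ with $(m_i-1)P - E_i^* \not\sim_{G_i}$ effective. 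If $(m_1+m_2-1)P - (E_1^*+E_2^*)$ were equivalent to an effective divisor, the gluing lemma would produce a split $n_1+n_2 = m_1+m_2-1$, forcing $n_i \le m_i-1$ for some $i$ and hence, after adding the surplus chips at $P$, $(m_i-1)P - E_i^* \sim$ effective, a contradiction. Thus $r_G((m_1+m_2-1)P) < k$, giving $\lambda_P^G(k)\ge m_1+m_2$; maximizing over splits completes the proof.
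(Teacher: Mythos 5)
Your argument is correct, and it rests on the same backbone as the paper's proof: decompose an effective divisor $E$ on $G$ as $E_1+E_2$ with $E_i$ supported on $G_i$, normalize $f(P)=0$ so that $\Delta f=\Delta f_1+\Delta f_2$, and control the coefficient at $P$ by a degree count. Where you genuinely diverge is in the intermediate statement. The paper proves a rank-level claim: $|nP-E|\neq\emptyset$ for \emph{every} $E\in\Div_+^k(G)$ if and only if $n\geq\lambda_{P_1}^{G_1}(k_1)+\lambda_{P_2}^{G_2}(k_2)$ for every $k_1+k_2=k$; its forward direction only needs to exhibit, for a contradiction, \emph{one} index $i$ with $(n_i-1)P-E_i+\Delta f_i\geq 0$. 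Your gluing lemma is instead a per-divisor biconditional --- $nP-E$ is linearly equivalent to an effective divisor iff $n$ splits as $n_1+n_2$ with each $n_iP-E_i$ effective in $G_i$ --- whose forward direction requires a \emph{simultaneous} splitting of the chips at $P$; your computation $L_1+L_2=n-F(P)\leq n$ together with $L_i\geq 0$ is exactly what makes the interval $[L_1,\,n-L_2]$ non-empty, and that step checks out. The payoff of your route is a cleaner, reusable local-to-global statement about linear equivalence on a vertex gluing (from which the paper's claim follows at once); the cost is that you must then prove the two inequalities for $\lambda_P^G(k)$ separately, whereas the paper reads the formula off a single chain of equalities. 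One minor remark: in the lower bound you assert $r_{G_i}((m_i-1)P)=k_i-1$, which does hold by Lemma~\ref{lem:rank minus one}, but all you actually use is $r_{G_i}((m_i-1)P)<k_i$, which is immediate from the definition of $\lambda_{P_i}$.
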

	\begin{proof}%[Second proof]
		We will consider $G_1$ and $G_2$ as subgraphs of $G$. First, note that every divisor $E \in \Div_+^k(G)$ can be decomposed as the sum $E = E_1+E_2$ where $E_i \in \Div_+^{k_i}(G_i)$ for $i \in \{1,2\}$ with $k_1+k_2 = k$. Further, if $\Delta f$ is a principal divisor in $G$, without loss of generality we can assume that $f(P) = 0$, so that $f = f_1 + f_2$ with $f_1 = 0$ in $G_2$ and $f_2 = 0$ in $G_1$. It follows that $\Delta f = \Delta f_1 + \Delta f_2$, in other words any principal divisor in $G$ is the sum of two principal divisors in $G_1$ and $G_2$ respectively.
		
		\begin{claim}
			Let $n,k \in \mathbb{N}$, the following statements are equivalent:
			\begin{enumerate}
				\item $|nP-E| \neq \emptyset$ for every $E \in \Div_+^k(G)$,
				\item $n \geq \lambda_{P_1}^{G_1}(k_1) + \lambda_{P_2}^{G_2}(k_2)$ for every $k_1 + k_2 = k$.
			\end{enumerate}
		\end{claim}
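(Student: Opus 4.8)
The plan is to prove the claim by establishing the two implications separately, using the decomposition machinery set up in the opening paragraph of the proof: every effective divisor $E \in \Div_+^k(G)$ splits as $E = E_1 + E_2$ with $E_i \in \Div_+^{k_i}(G_i)$, and every principal divisor on $G$ (after normalizing $f(P)=0$) splits as $\Delta f = \Delta f_1 + \Delta f_2$ with $f_i$ supported on $G_i$. The bridge between the two statements is the observation that for a divisor of the form $nP - E_i$ supported on $G_i$, linear equivalence to an effective divisor on $G_i$ is governed precisely by $\lambda_{P_i}^{G_i}$: namely $|n_i P - E_i| \neq \emptyset$ in $G_i$ for \emph{all} $E_i \in \Div_+^{k_i}(G_i)$ is exactly the condition $n_i \geq \lambda_{P_i}^{G_i}(k_i)$, since $r(n_i P) \geq k_i$ in $G_i$ is what $\lambda_{P_i}^{G_i}(k_i) \leq n_i$ encodes.

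First I would prove $(2) \Rightarrow (1)$. Fix $E \in \Div_+^k(G)$ and write $E = E_1 + E_2$ with $\deg E_i = k_i$ and $k_1 + k_2 = k$. By hypothesis $(2)$, $n \geq \lambda_{P_1}^{G_1}(k_1) + \lambda_{P_2}^{G_2}(k_2)$, so I can choose $n_i \geq \lambda_{P_i}^{G_i}(k_i)$ with $n_1 + n_2 = n$ (distributing the surplus $n - \lambda_{P_1}^{G_1}(k_1) - \lambda_{P_2}^{G_2}(k_2)$ arbitrarily, say onto $n_1$). Since $n_i \geq \lambda_{P_i}^{G_i}(k_i)$ means $r(n_i P) \geq k_i$ in $G_i$, we have $|n_i P - E_i| \neq \emptyset$ in $G_i$; pick $f_i \in \mathcal{M}(G_i)$ with $n_i P - E_i + \Delta f_i \geq 0$. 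Extending each $f_i$ by a constant to all of $G$ (so that it is principal on $G_i$ and zero on the other side), the sum gives $nP - E + \Delta(f_1 + f_2) \geq 0$ on $G$, proving $|nP - E| \neq \emptyset$.

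Next I would prove the contrapositive of $(1) \Rightarrow (2)$: if $(2)$ fails, then there exist $k_1 + k_2 = k$ with $n < \lambda_{P_1}^{G_1}(k_1) + \lambda_{P_2}^{G_2}(k_2)$, and I must produce a bad divisor $E$ with $|nP - E| = \emptyset$. Choose such $k_1, k_2$ and set $n_i = \lambda_{P_i}^{G_i}(k_i)$, so $n < n_1 + n_2$. For each $i$ there is, by minimality of $\lambda_{P_i}^{G_i}(k_i)$, an $E_i \in \Div_+^{k_i}(G_i)$ with $|(n_i - 1)P - E_i| = \emptyset$ in $G_i$ (this is a proof-type witness that $r((n_i-1)P) < k_i$). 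Put $E = E_1 + E_2 \in \Div_+^k(G)$. The crux is to show $|nP - E| = \emptyset$ on $G$: suppose for contradiction $nP - E + \Delta f \geq 0$ with $f(P) = 0$ and $f = f_1 + f_2$ split as above. Since the $G_1$- and $G_2$-parts decouple (each $\Delta f_i - E_i$ is supported on $G_i$ and the only shared coefficient is at $P$), effectivity forces $m_i P - E_i + \Delta f_i \geq 0$ on $G_i$ for some split $m_1 + m_2 = n$ of the available multiplicity at $P$; since $m_1 + m_2 = n < n_1 + n_2$, at least one $m_i \leq n_i - 1$, contradicting $|(n_i-1)P - E_i| = \emptyset$.

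\textbf{The main obstacle} is the bookkeeping at the shared vertex $P$ in this last step: unlike the effective divisor $E$, whose coefficient at $P$ splits unambiguously, the coefficient $n$ of $nP$ sits entirely at the glued vertex and must be \emph{partitioned} between the two sides in a way consistent with the inequality $\Delta f_i - E_i + m_i P \geq 0$. The careful point is that after fixing $f(P)=0$, the coefficient of $nP - E + \Delta f$ at $P$ equals $n - E(P) + \Delta_P f_1 + \Delta_P f_2$, and one must argue that the effectivity on each side individually certifies $r(m_i P) \geq k_i$ for the induced $m_i$, pinning down that $m_i \geq \lambda_{P_i}^{G_i}(k_i) = n_i$ would follow \emph{if} effectivity held — which is exactly what we are contradicting. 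Making this partition argument rigorous, rather than hand-waving ``the sides decouple,'' is where the real work lies, and it rests on the degree-zero property of principal divisors together with the support disjointness of $\Delta f_1 - E_1$ and $\Delta f_2 - E_2$ away from $P$.
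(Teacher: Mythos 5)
Your proposal is correct and follows essentially the same route as the paper: both directions use the decomposition $E = E_1 + E_2$, $\Delta f = \Delta f_1 + \Delta f_2$, with the witnesses $E_i$ realizing $|(\lambda_{P_i}^{G_i}(k_i)-1)P - E_i| = \emptyset$ for one direction and the functions $f_i$ realizing $|\lambda_{P_i}^{G_i}(k_i)P - E_i| \neq \emptyset$ for the other, and the bookkeeping at $P$ that you flag as the main obstacle is exactly the pigeonhole step the paper performs. The only cosmetic differences are that you phrase $(1) \Rightarrow (2)$ as a contrapositive where the paper argues directly with an inner contradiction, and that you absorb the surplus $n - n_1 - n_2$ into $n_1$ rather than keeping the inequality.
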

		\begin{proof}[Proof of claim]
			First of all, set $n_i = \lambda_{P_i}^{G_i}(k_i)$ for $i \in \{1,2\}$.
			\begin{description}
				\item[$1) \Rightarrow 2)$] Let $k_1,k_2 \in \mathbb{N}$ such that $k_1+k_2 = k$. By the definition of $n_i$, there exists $E_i \in \Div_+^{k_i}(G_i)$ such that $|(n_i-1)P - E_i| = \emptyset$ for $i \in \{1,2\}$. Set $E = E_1+E_2 \in \Div_+^k(G)$. By hypothesis we have
				\[ nP -E + \Delta f = nP + (\Delta f_1 - E_1) + (\Delta f_2 - E_2) \geq 0, \]
				for some $f \in \mathcal{M}(G)$, with $f = f_1 + f_2$ as described above. Assume by contradiction $n < n_1+n_2$, this means that, for some $i \in \{1,2\}$, we have $(n_i-1)P -E_i + \Delta f_i \geq 0$, that is $|(n_i-1)P-E_i| \neq \emptyset$, contradiction.
				
				\item[$2) \Rightarrow 1)$] Let $E \in \Div_+^k(G)$, then $E = E_1 + E_2$ with $E_i \in \Div_+^{k_i}(G_i)$ for $i \in \{1,2\}$ and $k_1+k_2 = k$. By the definition of $n_i$, there exists $f_i \in \mathcal{M}(G_i)$ such that $n_i P - E_i + \Delta f_i \geq 0$ for $i \in \{1,2\}$. Without loss of generality, we can assume that $f_1(P) = f_2(P) = 0$, set $f = f_1 + f_2$, we have
				\[ nP - E + \Delta f \geq \sum_{i \in \{1,2\}} (n_i P - E_i + \Delta f_i) \geq 0 \]
				that is $|nP-E| \neq \emptyset$. \qedhere
			\end{description}
		\end{proof}
		\noindent
		Now write
		\[ \begin{split}
			\lambda_P^G(k)&= \min \{ n \in \mathbb{N} : r(nP) \geq k \} \\
			&= \min \{ n \in \mathbb{N} : |nP-E| \neq \emptyset,\, \forall E \in \Div_+^k(G) \} \\
			&= \min \{ n \in \mathbb{N} : n \geq \lambda_{P_1}^{G_1}(k_1) + \lambda_{P_2}^{G_2}(k_2), \text{ for every }  k_1 + k_2 = k \} \\
			& = \max \left\{ \lambda_{P_1}^{G_1}(k_1) + \lambda_{P_2}^{G_2}(k_2) : k_1 + k_2 = k \right\}. \qedhere
		\end{split} \]
	\end{proof}

	\begin{remark}
		We note that the notion of rank Weierstrass set implicitly appears in \cite[Section 3.2]{pflueger2017chainscycles}. In fact, the so called \emph{Weierstrass partition} of the zero divisor at a marked point $P$ encodes the information of the rank Weierstrass set $H_r(P)$ (see \cite[Definition~3.11]{pflueger2017chainscycles}). Further, we note that one of the main techniques used in \cite{pflueger2017chainscycles} to study the behaviour of Weierstrass partitions is the (analogue of) vertex gluing of an arbitrary metric graph with a cycle.
	\end{remark}

	\begin{theorem}\label{thm:Weierstrass set sequence}
		Let $e_1 \geq e_2 \geq \dots \geq e_n \geq 0$ be integers and set $s_i = \nolinebreak \sum_{j=1}^i e_j$. There exists a simple graph $G$ with a vertex $P \in V(G)$ such that
		\[ H_r(P) = \{ 0,s_1,s_2,\dots,s_{n-1}\} \cup (s_n + \mathbb{N}). \]
	\end{theorem}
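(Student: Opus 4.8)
The plan is to realize the prescribed set by gluing together complete bipartite graphs along a common vertex and computing the resulting rank function via the sup-convolution formula of Proposition \ref{prop:lambda sum}. Recall that $\lambda_P$ is an order-preserving bijection $\mathbb{N} \to H_r(P)$, so it suffices to produce a graph $G$ and a vertex $P$ whose rank function is
\[ \lambda_P(k) = \begin{cases} s_k & 0 \le k \le K, \\ s_K + (k-K) & k \ge K, \end{cases} \]
where $K$ denotes the number of indices $i$ with $e_i \ge 2$ (so that these are $e_1,\dots,e_K$ by monotonicity). A short check, using that $e_{K+1},\dots,e_n \in \{0,1\}$ makes the partial sums $s_K \le s_{K+1} \le \dots \le s_n$ fill every integer in $[s_K,s_n]$, shows that the set $\{\lambda_P(k) : k \in \mathbb{N}\}$ is exactly $\{0,s_1,\dots,s_{n-1}\} \cup (s_n + \mathbb{N})$. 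If $K = 0$ the target set equals $\mathbb{N}$ and any tree on at least two vertices works, so I assume $K \ge 1$.

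First I would fix the building blocks. For each $i \in \{1,\dots,K\}$ take $G_i = K_{2,e_i}$ and let $P_i$ be a vertex of degree $e_i$. By Corollary \ref{cor:rank complete bipartite} we get $H_r(P_i) = e_i\mathbb{N} \cup (e_i + \mathbb{N}) = \{0\} \cup (e_i + \mathbb{N})$, so that
\[ \lambda^{G_i}_{P_i}(k) = \begin{cases} 0 & k = 0, \\ e_i + (k-1) & k \ge 1. \end{cases} \]
Let $G$ be the graph obtained by successively gluing $G_1,\dots,G_K$ along the vertices $P_1,\dots,P_K$, and let $P$ be the identified vertex; since a vertex gluing of simple graphs at single vertices introduces neither loops nor parallel edges, $G$ is simple.

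The core of the argument is the explicit evaluation of the iterated sup-convolution. Iterating Proposition \ref{prop:lambda sum} (the max-plus convolution is associative) gives
\[ \lambda^G_P(k) = \max\left\{ \sum_{i=1}^K \lambda^{G_i}_{P_i}(k_i) : k_i \ge 0,\ \sum_{i=1}^K k_i = k \right\}. \]
For a feasible tuple $(k_i)$, set $T = \{i : k_i \ge 1\}$; then $\sum_i \lambda^{G_i}_{P_i}(k_i) = \sum_{i \in T} e_i + (k - |T|)$, which depends on $(k_i)$ only through $\sum_{i \in T} e_i$ and $|T|$, with the sole constraint $|T| \le k$. Maximizing over subsets $T$ of fixed size $t$ amounts to choosing the $t$ largest weights $e_1,\dots,e_t$ (this is where $e_1 \ge \dots \ge e_K$ enters), contributing $s_t$; hence
\[ \lambda^G_P(k) = \max_{0 \le t \le \min(k,K)} \big( s_t + k - t \big) = k + \max_{0 \le t \le \min(k,K)} \sum_{i=1}^t (e_i - 1). \]
Because $e_i - 1 \ge 1$ for $i \le K$, the inner sum is strictly increasing in $t$, so the maximum is attained at $t = \min(k,K)$, yielding $\lambda^G_P(k) = s_k$ for $k \le K$ and $\lambda^G_P(k) = s_K + (k-K)$ for $k \ge K$, as required.

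I expect the main obstacle to be isolating the two inert facts that drive the computation: that the contribution of a distribution $(k_i)$ depends only on the weight-sum and the cardinality of its support, and that the monotonicity of the $e_i$ forces the optimum to the boundary $t = \min(k,K)$. Everything else is routine: the equivalence of the two descriptions of the target set, the simplicity of the glued graph, and the associativity needed to iterate Proposition \ref{prop:lambda sum}. The only care point is the degenerate regime where some $e_i$ vanish or equal $1$; such indices are simply omitted from the construction, and the set-theoretic check in the first paragraph absorbs them into the ray $s_n + \mathbb{N}$.
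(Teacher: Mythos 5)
Your proposal is correct and follows essentially the same route as the paper: both realize the prescribed set by vertex-gluing the graphs $K_{2,e_i}$ at a common vertex and evaluating the resulting rank function via Corollary \ref{cor:rank complete bipartite} and the max-plus convolution of Proposition \ref{prop:lambda sum}. The only difference is organizational --- the paper inducts on $n$, gluing one block at a time, whereas you evaluate the $K$-fold convolution in a single explicit computation and are somewhat more careful about the degenerate indices with $e_i \in \{0,1\}$, which the paper's induction passes over silently.
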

	\begin{proof}
		%Denote by $W_{g+1}$ a graph of genus $g$ with a fixed non-Weierstrass point $P_{g+1}$. 
		%and \todo{$G = W_{e_1} + W_{e_2} + \dots + W_{e_n}$ along $P \in V(G)$}
		
		We proceed by induction on $n$. For the base case $n=1$, by Corollary \ref{cor:rank complete bipartite} it is enough to consider the graph $K_{2,e_1}$ and a vertex $P$ of degree $e_1$. Now assume that the theorem is true for $n-1$, and let $G'$ be a graph with a vertex $P_1$ such that
		\[ H_r^{G'}(P_1) = \{ 0, s_1,\dots,s_{n-1}\} \cup (s_{n-1} + \mathbb{N}). \]
		Consider the graph $K_{2,e_n}$ and fix a vertex $P_2$ of degree $e_n$. Let $G$ be the vertex gluing of $P_1$ and $P_2$. From Corollary \ref{cor:rank complete bipartite} we have
		\[ H_r^{K_{2,e_n}}(P_2) = \{ 0 \} \cup (e_n + \mathbb{N}). \]
		Now apply Proposition \ref{prop:lambda sum} to the vertex gluing $G$ of $P_1$ and $P_2$.
	\end{proof}
	
	\begin{remark}
		In the proof of Theorem \ref{thm:Weierstrass set sequence} we glued together complete bipartite graphs $K_{2,e_i}$ along vertices of degree $e_i$. However, we could have used any graph of genus $e_i-1$ with a fixed non-Weierstrass point, i.e. with a fixed vertex in which the Weierstrass set is $\mathbb{N} \setminus \{ 1,\dots,e_i-1 \}$.
	\end{remark}
	
	\begin{remark}
		From Theorem \ref{thm:Weierstrass set sequence} it follows that every Arf numerical semigroup is the rank Weierstrass set of some graph. In fact, it is enough to choose the sequence $e_1 \geq \dots \geq e_n$ to be the multiplicity sequence of the given Arf numerical semigroup. See \cite[Section 2]{barucci2003Arf} for more information about Arf numerical semigroups and their multiplicity sequence.
	\end{remark}
	
	Theorem \ref{thm:Weierstrass set sequence} can be used to construct families of graphs with rank Weierstrass set that is not a semigroup. We now describe an example of such a graph.
	
	\begin{example}\label{ex:non semigroup}
		Let $n = 3$ and $(e_1,e_2,e_3) = (3,2,2)$. Following the idea in the proof of Theorem \ref{thm:Weierstrass set sequence}, we consider the graph $G$ (Figure \ref{ex:graph exmaple 1}) obtained as the vertex gluing of $K_{2,3}$ and two copies of $K_{2,2}$. Let $P \in V(G)$ be the identified vertex of degree $7$. We have
		\[ H_r(P) = \{ 0,3,5,7\} \cup (8 + \mathbb{N}). \]
		Note that $H_r(P)$ is not a semigroup, since $3 \in H_r(P)$, but $3+3 = 6 \notin H_r(P)$.
		\begin{figure}[h!]
 			\centering
 			\begin{tikzpicture}
	 			[scale=1,auto=left,every node/.style={circle,scale=.4,fill=black}]
				
	 			\node (n1) at (0,5) {};
	 			\node (n2) at (0,3) {};
	 			\node (n3) at (-1,4) {};
	 			\node (n4) at (1,4) {};
	 			\node (n5) at (-1,2.5) {};
	 			\node (n6) at (-0.5,2) {};
	 			\node (n7) at (1,2.5) {};
	 			\node (n8) at (0.5,2) {};
	 			\node (n9) at (0,4) {};
				\node (n10) at (1.5,1.5) {};
				\node (n11) at (-1.5,1.5) {};
				
	 			\foreach \from/\to in {n1/n9,n9/n2,n1/n3,n1/n4,n2/n3,n2/n4,n2/n5,n2/n6,n2/n7,n2/n8,n5/n11,n11/n6,n7/n10,n10/n8}
	 			\draw (\from) -- (\to);
 			\end{tikzpicture}
 			\caption{}\label{ex:graph exmaple 1}
 		\end{figure}
	\end{example}

	Theorem \ref{thm:Weierstrass set sequence} gives a sufficient condition for a subset of $\mathbb{N}$ to be the rank Weierstrass set of some graph. We now provide an easy necessary condition.
	
	\begin{proposition}
		Let $G$ be a graph and fix a vertex $P \in V(G)$. For every $n, k \in \mathbb{N}$ we have
		\[ \big|H_r(P) \cap [1,nk]\big| \geq k \big| H_r(P) \cap [1,n] \big|. \]
	\end{proposition}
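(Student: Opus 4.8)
The plan is to reduce the stated counting inequality to the super-additivity of the rank function, Lemma~\ref{lem:sum rank inequality}. The bridge is the identity
\[ \big|H_r(P) \cap [1,m]\big| = r(mP), \qquad m \in \mathbb{N}, \]
which I would establish first. This follows from the description of $\lambda_P$ given above: since $r(mP) - r((m-1)P) \in \{0,1\}$ for every $m \geq 1$ --- the lower bound being monotonicity of the rank under adding the effective divisor $P$, and the upper bound being exactly Lemma~\ref{lem:rank minus one} applied to $D = mP$ --- the increment equals $1$ precisely when $m \in H_r(P)$. Telescoping from $r(0) = 0$ then gives $r(mP) = \sum_{j=1}^m \big( r(jP) - r((j-1)P) \big) = \big|H_r(P) \cap [1,m]\big|$.

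Granting this identity, the proposition is equivalent to the inequality $r(nk\,P) \geq k\, r(nP)$. Here I would simply write $nk\,P$ as the sum of $k$ copies of the effective divisor $nP$. Since $nP \geq 0$ we have $r(nP) \geq 0$, so Lemma~\ref{lem:sum rank inequality} applies; iterating it $k-1$ times yields $r(nk\,P) \geq k\,r(nP)$, which is what we want.

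I do not anticipate a genuine obstacle: the argument is a one-line application of super-additivity once the translation into ranks is in place. The only points needing a word of care are the boundary cases $k=0$ or $n=0$, where both sides of the inequality vanish, and the elementary fact that the rank increments $r(mP) - r((m-1)P)$ lie in $\{0,1\}$, which --- while not stated verbatim in the preliminaries --- is immediate from Lemma~\ref{lem:rank minus one} together with the monotonicity $r(D+P) \geq r(D)$ valid whenever $r(D) \geq 0$.
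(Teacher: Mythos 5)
Your proof is correct and follows essentially the same route as the paper: both rest on the identity $r(mP) = |H_r(P)\cap[1,m]|$ (which the paper asserts directly from the definition, and you justify carefully via the $\{0,1\}$-valued increments and telescoping) followed by an iterated application of Lemma~\ref{lem:sum rank inequality} to get $r(nkP)\geq k\,r(nP)$.
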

	\begin{proof}
		From the definition we have $r(nP) = \big| H_r(P) \cap [1,n] \big|$, further from Lemma \ref{lem:sum rank inequality} $r(nkP) \geq k\,r(nP)$.
	\end{proof}

	\begin{question}
		Can we characterize the cofinite subsets $H \subseteq \mathbb{N}$ that are the rank Weierstrass set of some graph? %What is their density in the family of cofinite subets of $\mathbb{N}$?
	\end{question}
	
	In \cite{baker2009harmonic} the notion of harmonic morphism between graphs is studied. It is a discrete analogue of morphisms of curves. In particular, in this context it makes sense to talk about hyperelliptic graphs and double covers.
	
	Classically, we know that a curve $X$ is hyperelliptic if and only if there exists $P \in X$ such that $2 \in H(P)$. An analogous fact is proved in \cite[Theorem A]{torres1994double}: a curve $X$ of genus $g \geq 6 \gamma + 4$ is a double cover of a curve of genus $\gamma \geq 1$ if and only if there exists $P \in X$ such that $H(P)$ has $\gamma$ even gaps.
	
	\begin{question}
		Can we find an analogue of \cite[Theorem A]{torres1994double} for graphs?
	\end{question}
	
	\paragraph{Acknowledgments.} I would like to thank Diane Maclagan, Yoav Len and Victoria Schleis for a careful reading and comments on an earlier version of this manuscript, and Marta Panizzut, Gretchen Matthews, Cong X. Kang and Nathan Pflueger for useful discussions and email exchanges.

	\bibliographystyle{abbrv}
	\bibliography{Reference.bib}

\begin{thebibliography}{10}

\bibitem{baker2008specialization}
M.~Baker.
\newblock Specialization of linear systems from curves to graphs.
\newblock {\em Algebra Number Theory}, 2(6):613--653, 2008.
\newblock With an appendix by Brian Conrad.

\bibitem{baker2007riemann}
M.~Baker and S.~Norine.
\newblock Riemann-{R}och and {A}bel-{J}acobi theory on a finite graph.
\newblock {\em Adv. Math.}, 215(2):766--788, 2007.

\bibitem{baker2009harmonic}
M.~Baker and S.~Norine.
\newblock Harmonic morphisms and hyperelliptic graphs.
\newblock {\em Int. Math. Res. Not. IMRN}, (15):2914--2955, 2009.

\bibitem{barucci2003Arf}
V.~Barucci, M.~D'Anna, and R.~Fr\"{o}berg.
\newblock Arf characters of an algebroid curve.
\newblock {\em JP J. Algebra Number Theory Appl.}, 3(2):219--243, 2003.

\bibitem{buchweitz1980zariski}
R.-O. Buchweitz.
\newblock {\em On Zariski's criterion for equisingularity and non-smoothable
  monomial curves}.
\newblock Inst., Univ., 1980.

\bibitem{cools2017gonality}
F.~Cools and M.~Panizzut.
\newblock The gonality sequence of complete graphs.
\newblock {\em Electron. J. Combin.}, 24(4):Paper No. 4.1, 20, 2017.

\bibitem{cori2016rank}
R.~Cori and Y.~Le~Borgne.
\newblock On computation of {B}aker and {N}orine's rank on complete graphs.
\newblock {\em Electron. J. Combin.}, 23(1):Paper 1.31, 47, 2016.

\bibitem{dadderio2018sandpile}
M.~D'Adderio and Y.~Le~Borgne.
\newblock The sandpile model on {$K_{m,n}$} and the rank of its configurations.
\newblock {\em S\'{e}m. Lothar. Combin.}, 77:Art. B77h, 48, [2016-2018].

\bibitem{eisenbud1987weierstrass}
D.~Eisenbud and J.~Harris.
\newblock Existence, decomposition, and limits of certain {W}eierstrass points.
\newblock {\em Invent. Math.}, 87(3):495--515, 1987.

\bibitem{eliahou2020buchweitz}
S.~Eliahou, J.~I. Garc{\'\i}a-Garc{\'\i}a, D.~Mar{\'\i}n-Arag{\'o}n, and
  A.~Vigneron-Tenorio.
\newblock The {B}uchweitz set of a numerical semigroup.
\newblock {\em arXiv preprint arXiv:2011.09187}, 2020.

\bibitem{farkas1992riemann}
H.~M. Farkas and I.~Kra.
\newblock {\em Riemann surfaces}, volume~71 of {\em Graduate Texts in
  Mathematics}.
\newblock Springer-Verlag, New York, second edition, 1992.

\bibitem{gathmann2008riemann}
A.~Gathmann and M.~Kerber.
\newblock A {R}iemann-{R}och theorem in tropical geometry.
\newblock {\em Math. Z.}, 259(1):217--230, 2008.

\bibitem{hurwitz1892algebraische}
A.~Hurwitz.
\newblock {\"U}ber algebraische gebilde mit eindeutigen transformationen in
  sich.
\newblock {\em Mathematische Annalen}, 41(3):403--442, 1892.

\bibitem{kang2019laplacian}
C.~X. Kang, G.~L. Matthews, and J.~D. Peachey.
\newblock On {L}aplacian monopoles.
\newblock {\em Australas. J. Combin.}, 77(3):383--397, 2020.

\bibitem{kaplan2013proportion}
N.~Kaplan and L.~Ye.
\newblock The proportion of {W}eierstrass semigroups.
\newblock {\em J. Algebra}, 373:377--391, 2013.

\bibitem{komeda1998nonweierstrass}
J.~Komeda.
\newblock Non-{W}eierstrass numerical semigroups.
\newblock {\em Semigroup Forum}, 57(2):157--185, 1998.

\bibitem{mikhalkin2008jacobian}
G.~Mikhalkin and I.~Zharkov.
\newblock Tropical curves, their {J}acobians and theta functions.
\newblock In {\em Curves and abelian varieties}, volume 465 of {\em Contemp.
  Math.}, pages 203--230. Amer. Math. Soc., Providence, RI, 2008.

\bibitem{pflueger2017chainscycles}
N.~Pflueger.
\newblock Special divisors on marked chains of cycles.
\newblock {\em J. Combin. Theory Ser. A}, 150:182--207, 2017.

\bibitem{richman2019distribution}
D.~H. Richman.
\newblock The distribution of {W}eierstrass points on a tropical curve.
\newblock {\em S\'{e}m. Lothar. Combin.}, 82B:Art. 72, 12, 2020.

\bibitem{rosales2009numerical}
J.~C. Rosales and P.~A. Garc\'{\i}a-S\'{a}nchez.
\newblock {\em Numerical semigroups}, volume~20 of {\em Developments in
  Mathematics}.
\newblock Springer, New York, 2009.

\bibitem{torres1994double}
F.~Torres.
\newblock Weierstrass points and double coverings of curves. {W}ith
  application: symmetric numerical semigroups which cannot be realized as
  {W}eierstrass semigroups.
\newblock {\em Manuscripta Math.}, 83(1):39--58, 1994.

\end{thebibliography}
	
 	\noindent
	{\scshape Alessio Borz\`{i}} \quad \texttt{Alessio.Borzi@warwick.ac.uk}\\
 	{\scshape  Mathematics Institute, University of Warwick, Coventry CV4 7AL, United Kingdom.}
	
\end{document}